\newtheorem{thm}{Theorem}[section]
\newtheorem{Athm}{Theorem}
\newtheorem{cor}[thm]{Corollary}
\newtheorem{lem}[thm]{Lemma}
\theoremstyle{definition}
\numberwithin{equation}{section}
\def\eq#1{{\rm(\ref{#1})}}
\def\Eq#1#2{\ifthenelse{\equal{#1}{*}}
  {\begin{equation*}\begin{aligned}[]#2\end{aligned}\end{equation*}}
  {\begin{equation}\begin{aligned}[]\label{#1}#2\end{aligned}\end{equation}}}
\def\A{\mathscr{A}}
\def\B{\mathscr{B}}
\def\D{\mathscr{D}}
\def\G{\mathscr{G}}
\def\H{\mathscr{H}}
\def\calC{\mathcal{C}}
\newcommand{\operator}[1]{\mathop{\vphantom{\sum}\mathchoice
{\vcenter{\hbox{\LARGE $#1$}}}
{\vcenter{\hbox{\large $#1$}}}{#1}{#1}}\displaylimits}
\def\Mst_#1^#2{\operator{\mathscr{M}_{\mbox{\scriptsize$\#$}}\!\!}_{#1}^{#2}\,\,}
\newcommand\R{\mathbb{R}}
\newcommand\N{\mathbb{N}}
\newcommand{\QA}[1]{\A_{#1}}
\DeclareMathOperator{\sign}{sign}
\title[On the Jensen convexity of means]{On the Jensen convexity of \\ quasideviation and Bajraktarević means}
\author{Zsolt P\'ales}
\address{Institute of Mathematics, University of Debrecen, Pf.\ 400, 4002 Debrecen, Hungary}
\email{pales@science.unideb.hu}
\author{Pawe\l{} Pasteczka}
\address{Institute of Mathematics, Pedagogical University of Krak\'ow,  Podchor\k{a}\.{z}ych str 2, 30-084 Krak\'ow, Poland}
\email{pawel.pasteczka@up.krakow.pl}
\thanks{The first author was supported by the K-134191 NKFIH Grant.}
\keywords{Jensen-convexity, quasideviation mean, Bajraktarević mean, quasiarithmetic mean, Bernstein--Doetsch theorem}
\subjclass{26E60,26B25,39B62}
\dedicatory{This paper is dedicated to the 70th birthdays of \\ Professors Maciej Sablik and László Székelyhidi.}
\begin{document}
\begin{abstract}
Motivated by the characterization theorem about the Jensen convexity of quasiarithmetic means obtained by the authors in 2021, our main goal is to establish a characterization of the Jensen convexity of quasideviation as well as of Bajraktarević means without \emph{any additional and unnatural regularity assumptions}.
\end{abstract}
\maketitle

\section{Introduction}

Let $I$ denote a nonempty open subinterval of $\R$ throughout this paper. Given $n\in\N$, a function $M_n:I^n\to I$ is called an \emph{$n$-variable mean} if
\Eq{*}{
  \min(x_1,\dots,x_n)\leq M_n(x)\leq
  \max(x_1,\dots,x_n)
}
holds for all $x=(x_1,\dots,x_n)\in I^n$. A function $M:\bigcup_{n=1}^{\infty} I^n\to I$ is said to be a \emph{mean} if, for all $n\in\N$, the restriction $M|_{I^n}$ is an $n$-variable mean. 

The Jensen convexity (Jensen concavity) of means become a key property in the investigation of Hardy-type inequalities (cf.\ \cite{PalPas16,PalPas18a,PalPas18b}).
An $n$-variable mean $M_n:I^n\to I$ is said to be \emph{Jensen convex} if, for all $x,y\in I^n$,
\Eq{*}{
  M_n\Big(\frac{x+y}{2}\Big)\leq\frac{M_n(x)+M_n(y)}{2}.
}
A mean $M:\bigcup_{n=1}^{\infty} I^n\to I$ is said to be \emph{Jensen convex} if, for all $n\in\N$, the $n$-variable mean $M_n:=M|_{I^n}$ is Jensen convex. 

Given $p\in\R$, the \emph{$p^{\mbox{\small\rm th}}$ power mean} or \emph{Hölder mean} $\H_p\colon \bigcup_{n=1}^{\infty} \R_+^n \to \R_+$ is defined by
\Eq{*}{
\H_p(x):= 
\begin{cases} 
\bigg(\dfrac{x_1^p+\cdots+x_n^p}{n} \bigg)^{\frac1p} &\quad \text{ if } p \ne 0, \\[4mm]                                                         
\left(x_1\cdots x_n \right)^{\frac1{n}} &\quad \text{ if } p = 0.
\end{cases}
}
Concerning the convexity of Hölder means we have the following classical result.

\begin{Athm}
Let $n\geq2$ be fixed. Let $p\in\R$, and $I$ be a subinterval of $\R_+$. Then the $n$-variable mean $\H_p|_{I^n}$ is Jensen convex if and only if $p\geq1$.
\end{Athm}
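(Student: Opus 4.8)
The plan is to reduce Jensen convexity to ordinary convexity and then to treat the two implications separately. Since each restriction $\H_p|_{I^n}$ is continuous, the Bernstein--Doetsch theorem guarantees that $\H_p|_{I^n}$ is Jensen convex if and only if it is convex in the ordinary sense. Thus it suffices to characterize when $\H_p$ is convex on $I^n$, and throughout I assume $I$ is nondegenerate (for a single point the statement is trivial, the mean being constant).

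For the sufficiency of $p\geq1$, I would exploit the positive homogeneity of $\H_p$. For $x\in\R_+^n$ one has $\H_p(x)=n^{-1/p}\norm{x}_p$, where $\norm{\cdot}_p$ denotes the $\ell^p$-norm; by Minkowski's inequality this is subadditive, and being also positively homogeneous of degree one, a short argument shows it is convex on $\R_+^n$. Multiplying by the positive constant $n^{-1/p}$ preserves convexity, so $\H_p|_{I^n}$ is convex. The limiting case $p=1$ is the arithmetic mean, which is affine and hence trivially convex.

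For the necessity, I would show that $\H_p$ fails to be convex whenever $p<1$ by testing convexity along a single coordinate direction, which is legitimate because convexity of a multivariate function forces convexity of every one-variable restriction. Fixing an interior point $a$ of $I$, setting all coordinates equal to $a$ except the first, writing $g(t):=\H_p(t,a,\dots,a)$ and $S:=(n-1)a^p>0$, a direct computation for $p\neq0$ yields
\Eq{*}{
g''(t)=\frac{p-1}{n}\,\Big(\tfrac{t^p+S}{n}\Big)^{\frac1p-2}\,t^{p-2}\cdot\frac{S}{n},
}
whose sign coincides with that of $p-1$. Hence for $p<1$ with $p\neq0$ the map $g$ is strictly concave, so $\H_p$ is not convex and therefore not Jensen convex. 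The geometric mean $p=0$ fits the same scheme: there $g(t)=Ct^{1/n}$ with $C>0$, so $g''<0$ because $\tfrac1n<1$ for $n\geq2$.

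I expect the main obstacle to lie in the necessity direction, specifically in making sure that detecting concavity along a one-dimensional slice genuinely rules out Jensen convexity. This is exactly where continuity and the Bernstein--Doetsch reduction do the real work, converting the local second-order information on $g$ into a global obstruction to the Jensen inequality. The remaining care is bookkeeping: ensuring that an interior point and a nontrivial segment exist in $I$, and verifying that the single differentiation argument uniformly covers the ranges $0<p<1$, $p=0$, and $p<0$.
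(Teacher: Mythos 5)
The paper states this theorem as a classical background fact and supplies no proof of its own, so there is no in-paper argument to compare against; your proof must be judged on its own terms, and it is correct. The sufficiency via $\H_p(x)=n^{-1/p}\norm{x}_p$, Minkowski's inequality and positive homogeneity is standard, and your second-derivative computation for the slice $g(t)=\H_p(t,a,\dots,a)$ checks out: with $S=(n-1)a^p$ one indeed gets $g''(t)=\tfrac{(p-1)S}{n^2}\big(\tfrac{t^p+S}{n}\big)^{\frac1p-2}t^{p-2}$, whose sign is that of $p-1$, and the case $p=0$ is handled separately. One remark on emphasis: you locate the ``real work'' of the necessity direction in the Bernstein--Doetsch reduction, but that reduction is not actually needed anywhere in your argument. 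For sufficiency, convexity trivially implies Jensen convexity. For necessity, once $g''<0$ on a nondegenerate subinterval of $I$, $g$ is strictly concave there, and strict concavity already reverses the midpoint inequality for every pair of distinct points $t_1\neq t_2$; evaluating $\H_p$ at $(t_1,a,\dots,a)$ and $(t_2,a,\dots,a)$ then directly violates the Jensen convexity inequality, with no appeal to continuity or to any regularity theorem. So the proof simplifies. The only bookkeeping caveat is the one you already flag: the ``only if'' direction genuinely requires $I$ to be nondegenerate (for a singleton every $\H_p|_{I^n}$ is trivially Jensen convex), which is consistent with the paper's standing convention that intervals are nonempty and open.
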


An important generalization of Hölder means is the notion of quasiarithmetic means. Given a continuous strictly monotone function $f \colon I \to \R$, the \emph{quasiarithmetic mean} 
$\A_f:\bigcup_{n=1}^{\infty} I^n \to I$ is defined by
\Eq{QA}{
\A_f(x):= f^{-1} \left( \frac{f(x_1)+\cdots+f(x_n)}{n} \right).
}
If $p\in\R\setminus\{0\}$ and $f(x):=x^p$ for $x\in\R_+$, then $\A_f=\H_p$. If $f(x):=\log x$ for $x\in\R_+$, then $\A_f=\H_0$, therefore, Hölder means are indeed quasiarithmetic means.

The Jensen convexity of quasiarithmetic means have been ultimately characterized by combining the results of the papers \cite{ChuGlaJarJar19} and \cite{PalPas21a}.

\begin{Athm}\label{thm:B}
Let $n\geq2$ be fixed and $f \colon I \to \R$ be continuous and strictly monotone. Then the $n$-variable mean $\A_f|_{I^n}$ is Jensen convex if and only if $f$ is twice continuously differentiable with a nonvanishing first derivative and either $f''$ is identically zero on $I$, or $f''$ is nowhere zero and $f'/f''$ is positive and convex on $I$.
\end{Athm}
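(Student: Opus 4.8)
The plan is to separate the equivalence into a second-order computation that is valid under the regularity hypotheses and a regularity statement that must be extracted from the midpoint inequality, treating the latter as the genuinely hard part. First I would record the structural reduction. If $f$ is twice continuously differentiable with $f'$ nonvanishing, then $f^{-1}$ is of class $C^2$ (inverse function theorem), so $\A_f$ is $C^2$ on $I^n$ and in particular continuous; for continuous functions Jensen convexity coincides with ordinary convexity, hence in the smooth regime ``$\A_f|_{I^n}$ Jensen convex'' is equivalent to positive semidefiniteness of the Hessian of $\A_f$ at every point. Thus the theorem splits into: (i) under the regularity hypotheses, the Hessian condition is equivalent to the stated dichotomy for $f''$ and $f'/f''$; and (ii) conversely, a merely continuous strictly monotone generator whose mean is Jensen convex must already be twice continuously differentiable with $f'\neq0$. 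Combining these gives both implications.

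For task (i) I would compute directly. Writing $g:=f^{-1}$ and $S(x):=\frac1n\sum_{k=1}^n f(x_k)$, so that $\A_f=g\circ S$, the chain rule yields
\Eq{*}{
H_{ij}=\frac1{n^2}\,g''(S)\,f'(x_i)f'(x_j)+\frac1n\,g'(S)\,f''(x_i)\delta_{ij},
}
which, after substituting $g'(S)=1/f'(m)$ and $g''(S)=-f''(m)/f'(m)^3$ with $m:=\A_f(x)$, is a positive diagonal term corrected by a rank-one term. Positive semidefiniteness of such a rank-one update is governed by a single scalar inequality: by a standard positive-semidefiniteness criterion for a rank-one perturbation of a positive diagonal matrix (equivalently, via Cauchy--Schwarz) it reduces to a Jensen-type inequality for the auxiliary function $x\mapsto f'(x)^2/f''(x)$ evaluated along the mean $m$. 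I would then translate this, via the substitution $t=f(x)$, into the convexity condition for $f'/f''$ stated in the theorem, the positivity and the normalization $f''>0$ coming from $\A_{-f}=\A_f$, which permits assuming $f'>0$; the branch $f''\equiv0$ corresponds to an affine generator, for which $\A_f$ is the arithmetic mean and Jensen convexity is trivial.

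The main obstacle is task (ii), the regularity, since no differentiability of $f$ is assumed and it must be extracted from the midpoint inequality; this is exactly the content obtained by combining \cite{ChuGlaJarJar19} and \cite{PalPas21a}. My plan is as follows. Every $n$-variable mean is bounded, so the Bernstein--Doetsch theorem upgrades Jensen convexity to continuity and hence to genuine convexity of $\A_f$; a convex function is locally Lipschitz, has one-sided partial derivatives everywhere, and is twice differentiable almost everywhere. I would exploit these partial regularity properties through the defining identity $f(\A_f(x))=S(x)$ together with specially chosen configurations, most usefully the ``one variable free'' points $x=(s,\dots,s,t)$, which collapse the $n$-variable inequality to a two-variable functional inequality for $f$. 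From this I would show successively that $f$ is locally Lipschitz, then continuously differentiable with $f'\neq0$, and finally twice differentiable; continuity of $f''$ and the convexity of $f'/f''$ then return by rerunning the second-order analysis of task (i) once the derivatives are known to exist.

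I expect the delicate point to be the bootstrap from first-order to second-order regularity, where the almost-everywhere information supplied by convexity must be promoted to statements holding at every point and where the nondegeneracy $f''\neq0$ (as opposed to the degenerate affine branch) has to be decided. This is the heaviest part of the argument and is precisely where the two cited papers do the decisive work; everything else is either the routine differential computation of task (i) or the soft reductions above.
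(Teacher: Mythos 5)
The paper offers no proof of this statement to compare against: Theorem~\ref{thm:B} is imported wholesale from \cite{ChuGlaJarJar19} and \cite{PalPas21a}, so your plan can only be measured against those sources and against the paper's internal analogue, namely the proof of Theorem~\ref{thm:Bconv} specialized to $p\equiv1$. Judged on its own terms, your proposal has two genuine gaps.

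The first is in task (i), in the step you dismiss as a routine translation. Carrying your own reduction to the end: with $m:=\A_f(x)$ the Hessian is $\frac{1}{nf'(m)}\mathrm{diag}(f''(x_i))-\frac{f''(m)}{n^2f'(m)^3}vv^T$ with $v_i=f'(x_i)$, and for increasing $f$ with $f''>0$ its positive semidefiniteness is equivalent to the scalar inequality $\frac1n\sum_{i}\frac{f'(x_i)^2}{f''(x_i)}\le\frac{f'(m)^2}{f''(m)}$, i.e.\ to the Jensen \emph{concavity} of $h:=\bigl(f'^2/f''\bigr)\circ f^{-1}$. A direct computation gives $h'=1+\bigl(f'/f''\bigr)'\circ f^{-1}$, so the condition is a monotonicity statement about $(f'/f'')'$ whose \emph{direction} is exactly what decides whether the final answer reads ``convex'' or ``concave'' for $f'/f''$. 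Asserting that this ``translates into the convexity condition stated in the theorem'' without doing the bookkeeping skips the only nontrivial content of task (i); you must carry it out and reconcile the resulting sign with the statement. You also give no argument for the dichotomy that $f''$ is either identically zero or nowhere zero: if $f''$ vanished at some $x_i$ while $f''(m)>0$, the corresponding diagonal entry of the Hessian would be $-\frac{f''(m)f'(x_i)^2}{n^2f'(m)^3}<0$, and one must show such a configuration is actually attainable.

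The second gap is task (ii), which you correctly identify as the hard part but do not supply. Bernstein--Doetsch upgrades Jensen convexity of $\A_f$ to convexity and local Lipschitz continuity of the \emph{mean}, but you offer no mechanism for converting the resulting almost-everywhere regularity of $f$ into everywhere regularity; ``rerunning the second-order analysis once the derivatives are known to exist'' is circular, since the existence of $f''$ at every point is precisely what must be proved. The device this paper uses in the analogous situation (proof of Theorem~\ref{thm:Bconv}) is specific and would be needed here too: first show $f$ is convex, hence nearly differentiable; then invoke the constancy of $u\mapsto f'_+(u)/f'_-(u)$ (assertion (d) of Theorem~\ref{thm:QDconv}) together with the fact that this ratio equals $1$ off a countable set to conclude $f'_+=f'_-$ everywhere; finally extract existence and continuity of $f''$ from the convexity of $(x,u)\mapsto(f(x)-f(u))/f'(u)$ via one-sided derivatives in $u$ and the continuity of partial derivatives of differentiable convex functions. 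Without an argument of this kind, your outline of task (ii) records where the difficulty lies rather than resolving it.
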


Another generalization of Hölder means was introduced by Gini \cite{Gin38}. To recall this definition, for $q,r\in\R$, define the \emph{Gini mean} $\G_{q,r}\colon \bigcup_{n=1}^{\infty} \R_+^n \to \R_+$ by
\Eq{*}{
\G_{q,r} (x):= 
\begin{cases} 
\left(\dfrac{x_1^q+\cdots+x_n^q}{x_1^r+\cdots+x_n^r} \right)^{\frac{1}{q-r}} &\quad \text{ if } q \ne r, \\[4mm]                                                         
\exp\left(\dfrac{x_1^q\log x_1+\cdots+x_n^q\log x_n}{x_1^q+\cdots+x_n^q} \right) &\quad \text{ if } q = r.
\end{cases}
}
The characterization of the convexity of Gini means can easily be deduced from the general results of Losonczi \cite{Los71a} and it reads as follows.

\begin{Athm}
Let $q,r\in\R$. Then the mean $\G_{q,r}$ is Jensen convex if and only if $0\leq\min(q,r)\leq1\leq\max(q,r)$.
\end{Athm}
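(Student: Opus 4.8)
The plan is to reduce Jensen convexity to ordinary convexity and then to second-order conditions. For every $n$ the mean $\G_{q,r}|_{\R_+^n}$ is continuous (indeed real analytic) on the open set $\R_+^n$, so the Bernstein--Doetsch theorem makes its Jensen convexity equivalent to its ordinary convexity; thus I may freely pass between Jensen convexity, convexity, and Hessian/second-derivative conditions. Two structural reductions come first: the mean is symmetric in its parameters, $\G_{q,r}=\G_{r,q}$, so I may assume $q\le r$; and when one exponent vanishes, say $r=0$, one has $\G_{q,0}=\H_q$, whence Theorem~A yields the asserted equivalence directly in these boundary cases. The coincident case $q=r$ I would treat by letting $r\to q$ in the off-diagonal formula and using that pointwise limits of convex functions are convex.

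For necessity I would restrict to $n=2$: if the mean is Jensen convex for all $n$, it is in particular convex for $n=2$, and by positive homogeneity of degree one this is equivalent to convexity of the single-variable profile $g(t):=\G_{q,r}(t,1)=\big((t^q+1)/(t^r+1)\big)^{1/(q-r)}$ on $(0,\infty)$, with the relation $g(t)=t\,g(1/t)$ halving the work. Expanding $g$ near $t=0$ and $t=\infty$ shows that the leading term of $g''$ is governed by the smaller exponent and forces $0\le q\le 1$, while a negative exponent produces a boundary blow-up incompatible with convexity. The remaining condition $r\ge 1$ is invisible at the endpoints and must be extracted from the interior: after clearing a positive factor, the sign of $g''(t)$ equals that of an explicit elementary expression in $t^q$ and $t^r$, and one shows this expression becomes negative somewhere on $(0,\infty)$ precisely when $r<1$. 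Since the $n=2$ analysis thus yields exactly the region $0\le q\le1\le r$, and convexity for all $n$ entails convexity for $n=2$, necessity follows.

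For sufficiency I would prove convexity for every $n$ at once by showing the Hessian is positive semidefinite at each $x\in\R_+^n$. Writing $\G_{q,r}=\Phi(S_q,S_r)$ with $S_p=\sum_i x_i^p$ and $\Phi(a,b)=(a/b)^{1/(q-r)}$, a direct computation presents the Hessian as a diagonal matrix plus a correction of rank at most two built from the vectors $(x_i^{q-1})_i$ and $(x_i^{r-1})_i$; here the first partials simplify to $\Phi_a=\tfrac{\G_{q,r}}{(q-r)S_q}$ and $\Phi_b=-\tfrac{\G_{q,r}}{(q-r)S_r}$, which keeps the entries manageable. Positive semidefiniteness of a diagonal-plus-rank-two form can be reduced, via a Schur-complement and Cauchy--Schwarz argument, to a two-variable inequality that coincides with the one obtained in the $n=2$ profile analysis, so the same parameter region $0\le q\le1\le r$ governs all $n$.

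I expect the interior sign analysis in the necessity step, together with the verification that the diagonal-plus-rank-two Hessian is positive semidefinite for \emph{all} $n$ simultaneously, to be the main obstacle. The condition $r\ge1$ is detected neither by the boundary asymptotics nor by any single-variable perturbation (which always senses only the smaller exponent), so it genuinely requires controlling the full second-order behavior in the interior of the domain rather than merely near its boundary.
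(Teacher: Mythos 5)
The necessity half of your argument has a genuine gap that cannot be repaired within the $n=2$ setting. You claim that the convexity analysis of the profile $g(t)=\G_{q,r}(t,1)$ ``yields exactly the region $0\le q\le 1\le r$'', but it does not: by the Losonczi--P\'ales result quoted in this paper as Theorem D, the two-variable mean $\G_{q,r}|_{\R_+^2}$ is Jensen convex precisely when $0\le\min(q,r)\le1\le q+r$, a strictly larger region (for example, $q=r=3/5$ gives a Jensen convex two-variable mean although $\max(q,r)<1$, so the mean fails to be Jensen convex for larger $n$). Thus the condition $\max(q,r)\ge1$ is invisible to any $n=2$ computation, and your interior sign analysis of $g''$ would in fact confirm convexity throughout $1\le q+r$ rather than refute it for $r<1$. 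For the same reason, the sufficiency step's claim that the diagonal-plus-rank-two Hessian reduces ``to a two-variable inequality that coincides with the one obtained in the $n=2$ profile analysis'' cannot be correct as stated: if a single $n$-independent inequality equivalent to the $n=2$ case governed all $n$, then Theorem D and Theorem C would describe the same parameter region, which they do not.

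The intended route (Losonczi's Theorem E, carried out explicitly in Section 4 of the paper for subintervals) is different: write $\G_{q,r}=\B_{f,p}$ with $f(x)=x^{q-r}$, $p(x)=x^r$ (say for $q>r$), and reduce Jensen convexity of the mean \emph{for all $n$ simultaneously} to convexity on $\R_+^2$ of the auxiliary two-variable function
\Eq{*}{
  B_{f,p}(x,u)=\frac{p(x)(f(x)-f(u))}{p(u)f'(u)}=u\,\gamma_{q,r}\Big(\frac{x}{u}\Big),
}
which by homogeneity is equivalent to convexity of $\gamma_{q,r}(t)=(t^q-t^r)/(q-r)$ on all of $(0,\infty)$, i.e.\ to $q(q-1)t^{q}\ge r(r-1)t^{r}$ for every $t>0$; letting $t\to0$ and $t\to\infty$ forces $q(q-1)\ge0\ge r(r-1)$, which is exactly $0\le r\le1\le q$. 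The crucial point you are missing is that this auxiliary function is \emph{not} the two-variable restriction of the mean, and it is precisely here that the dependence on arbitrary $n$ is retained; any argument that only ever inspects $\G_{q,r}|_{\R_+^2}$ cannot detect the condition $\max(q,r)\ge1$.
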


It is important to emphasize that for a fixed number of the variables, the characterization is different as we have the result of Losonczi and Páles \cite{LosPal97a}:

\begin{Athm}
Let $q,r\in\R$. Then the $2$-variable mean $\G_{q,r}|_{\R_+^2}$ is Jensen convex if and only if $0\leq\min(q,r)\leq1\leq q+r$.  
\end{Athm}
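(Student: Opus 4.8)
The plan is to reduce the two-variable problem to a one-variable convexity question and then analyze it through the sign of a single second derivative. Fix $q,r\in\R$; by the symmetry $\G_{q,r}=\G_{r,q}$ we may assume $q\ge r$, and the degenerate case $q=r$ will follow by letting $r\to q$ (both the mean and the asserted region depend continuously on the parameters). Since $x_1^q+x_2^q$ and $x_1^r+x_2^r$ are positive and smooth on $\R_+^2$, the mean $\G_{q,r}$ is $C^\infty$ there, so its Jensen convexity coincides with its convexity (a continuous Jensen-convex function is convex, and smoothness is far more than continuity; this is exactly the step where the Bernstein--Doetsch theorem would be invoked in the absence of regularity). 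As $\G_{q,r}$ is positively homogeneous of degree $1$, writing $g(t):=\G_{q,r}(t,1)$ gives $\G_{q,r}(x_1,x_2)=x_2\,g(x_1/x_2)$; the Hessian of such a function annihilates the radial direction, and its remaining eigenvalue carries the sign of $\partial_{11}\G_{q,r}$, which at $(t,1)$ equals $g''(t)$. Consequently $\G_{q,r}|_{\R_+^2}$ is convex if and only if $g''(t)\ge0$ for every $t>0$. The reflection symmetry $g(1/t)=g(t)/t$ (just the symmetry of the mean in its two entries) yields $g''(1/t)=t^3g''(t)$, so it suffices to test $g''\ge0$ on $[1,\infty)$.

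For the necessity of the three inequalities I would read off the sign of $g''$ at two scales. At the diagonal $t=1$ a direct computation gives the clean identity $g''(1)=\tfrac14(q+r-1)$, so $g''(1)\ge0$ forces $q+r\ge1$. As $t\to\infty$, expanding $g(t)=t-\tfrac{1}{q-r}t^{1-r}+\tfrac{1}{q-r}t^{1-q}+\cdots$ yields $g''(t)\sim-\tfrac{r(r-1)}{q-r}\,t^{-1-r}$; since $q-r>0$, nonnegativity of the leading coefficient forces $r(r-1)\le0$, that is $0\le r\le1$, which (as $r=\min(q,r)$) is precisely $0\le\min(q,r)\le1$. Thus convexity already implies $0\le\min(q,r)\le1\le q+r$, with no constraint imposed on $\max(q,r)$ — exactly the feature distinguishing the two-variable case from the all-$n$ characterization.

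The substantial direction is sufficiency: assuming $0\le r\le1$, $q\ge r$ and $q+r\ge1$, I must show $g''(t)\ge0$ for all $t>0$, and this is the main obstacle, since the three test points above do not by themselves force a global sign. Here I would set $s:=\log t$ and introduce the logistic function $\sigma(u):=(1+e^{-u})^{-1}$, so that with $A:=\sigma(qs)$, $B:=\sigma(rs)$ the inequality $g''(t)\ge0$ becomes, after clearing positive factors, the single expression
\Eq{*}{
E:=q^2(1-q+r)A^2+r^2(1+q-r)B^2-2qr\,AB+(q-r)q(q-1)A+(q-r)r(1-r)B\ge0,
}
whose diagonal value $E|_{s=0}=\tfrac14(q-r)^2(q+r-1)$ recovers the diagonal condition. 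The coupling $A=\sigma(qs)$, $B=\sigma(rs)$ (equivalently $A/(1-A)=\big(B/(1-B)\big)^{q/r}$) must genuinely be used, since the bare quadratic form need not be nonnegative for independent $A,B$. My plan is to combine the reflection reduction to $s\ge0$ with a monotonicity analysis: one reformulates $g''\ge0$ as the statement that the auxiliary function $e^{-s}\,\Phi'(s)/(1-\Phi'(s))$ is nondecreasing, where $\Phi(s):=\log g(e^s)$ and $\Phi'$ increases from $0$ to $1$, and then decomposes $E$ into pieces each of definite sign on $s\ge0$, the $B$-terms and the large-$s$ behaviour being controlled by $0\le r\le1$ and the value at $s=0$ by $q+r\ge1$. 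I expect the delicate part to be the intermediate range of $s$, where neither asymptotic dominates; there I would verify the monotonicity of the auxiliary function directly from the two constraints rather than from the endpoint values.
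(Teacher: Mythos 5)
This statement is quoted in the paper as a known result of Losonczi and Páles \cite{LosPal97a}; the paper itself contains no proof of it, so your attempt can only be judged on its own terms. Your reduction is sound as far as it goes: since $\G_{q,r}$ is smooth and positively homogeneous of degree one on $\R_+^2$, Jensen convexity is equivalent to convexity, which is equivalent to $g''\ge 0$ on $\R_+$ for $g(t):=\G_{q,r}(t,1)$; the identity $g''(1/t)=t^3g''(t)$, the diagonal value $g''(1)=\tfrac14(q+r-1)$, and your expression $E$ (which equals $(q-r)^2\bigl(\Phi''+\Phi'(\Phi'-1)\bigr)$ with $A=\sigma(qs)$, $B=\sigma(rs)$) all check out. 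The necessity direction is therefore essentially complete, except that the asymptotic expansion $g(t)=t-\tfrac{1}{q-r}t^{1-r}+\cdots$ presupposes $q>r>0$; for $r\le 0$ one has instead $g(t)\sim t^{q/(q-r)}$ and $g''<0$ at infinity, so the conclusion $0\le r\le 1$ survives, but that case must be run separately.

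The genuine gap is the sufficiency direction, which you yourself identify as "the substantial direction" and then do not carry out. Proving $E\ge 0$ for all $s$ under $0\le r\le 1\le q+r$ (with $q\ge r$) is the entire content of the theorem: the quadratic form in $(A,B)$ appearing in $E$ is not nonnegative for independent $A,B$ in $[0,1]^2$, so everything hinges on exploiting the coupling $A=\sigma(qs)$, $B=\sigma(rs)$, and your text offers only a programme for doing so --- a decomposition "into pieces each of definite sign" that is never exhibited, and a monotonicity claim for $e^{-s}\Phi'(s)/(1-\Phi'(s))$ that is correctly shown to be equivalent to the assertion but not established. The three test points you compute ($s=0$, $s\to\pm\infty$) do not force a global sign, as you acknowledge, and the "intermediate range of $s$" that you defer is precisely where the inequality can fail when the hypotheses are violated and where the real work lies when they hold. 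As written, the "if" half of the theorem is unproven; you would need either to produce the claimed decomposition explicitly or to follow the route of \cite{LosPal97a}.
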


A class of means which includes Gini as well as quasiarithmetic means was discovered by Bajraktarević in the papers \cite{Baj58,Baj63}. Given a positive function $p:I\to\R_+$ and a continuous strictly monotone function $f:I\to\R$, the \emph{Bajraktarević mean} $\B_{f,p}:\bigcup_{n=1}^{\infty} I^n \to I$ is defined by
\Eq{*}{
  \B_{f,p}(x):=f^{-1}\left(\frac{p(x_1)f(x_1)+\cdots+p(x_n)f(x_n)}{p(x_1)+\cdots+p(x_n)}\right).
}
If $q,r\in\R$, $q\neq r$, $f(x):=x^{q-r}$, $p(x):=x^r$ for $x\in\R_+$, or if $q=r\in\R$ and $f(x):=\log(x)$, $p(x):=x^q$ for $x\in\R_+$, then $\B_{f,p}=\G_{q,r}$. Therefore, Gini means form a subclass of Bajraktarević means. On the other hand, if $p$ is a constant function, then one can see that $\B_{f,p}=\A_{f}$ and hence quasiarithmetic means are also included in the class of Bajraktarević means.

The convexity of Bajraktarević means with sufficiently regular generating functions was characterized by the following result of Losonczi \cite{Los71a}.

\begin{Athm}\label{Los71a}
Let $p:I\to\R_+$ be a positive function and $f:I\to\R$ be a differentiable strictly monotone function with a nonvanishing first derivative. Then the Bajraktarević mean $\B_{f,p}$ is convex if and only if the two-variable map
$B_{f,p}:I^2\to\R$ defined by
\Eq{*}{
  B_{f,p}(x,u):=\frac{p(x)(f(x)-f(u))}{p(u)f'(u)}
}
is convex.
\end{Athm}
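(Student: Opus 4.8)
The plan is to prove the two implications separately, both resting on a single reformulation of the defining equation of $\B_{f,p}$. Since $f$ is strictly monotone with nonvanishing derivative and $p$ is positive, the factor $p(y)f'(y)$ is nonzero and of constant sign on $I$; dividing the identity $\sum_{i=1}^n p(x_i)\big(f(x_i)-f(y)\big)=0$ by it yields the representation
\Eq{*}{
  y=\B_{f,p}(x_1,\dots,x_n)\quad\Longleftrightarrow\quad \sum_{i=1}^n B_{f,p}(x_i,y)=0.
}
Moreover, for fixed $v\in I^n$ the sum $\sum_i p(v_i)(f(v_i)-f(w))$ is strictly monotone in $w$ and the prefactor $1/(p(w)f'(w))$ has constant sign, so a short sign check in the two cases $f'>0$ and $f'<0$ shows that, regardless of the monotonicity type of $f$,
\Eq{*}{
  \B_{f,p}(v)\le w\quad\Longleftrightarrow\quad \sum_{i=1}^n B_{f,p}(v_i,w)\le 0.
}
This is the device that converts a statement about the mean into an additive statement about $B_{f,p}$.

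For the sufficiency (convexity of $B_{f,p}$ implies convexity of $\B_{f,p}$) the argument is then immediate. Fixing $x,z\in I^n$ and $\lambda\in[0,1]$, I put $y_1:=\B_{f,p}(x)$, $y_2:=\B_{f,p}(z)$, $v:=\lambda x+(1-\lambda)z$, and $w:=\lambda y_1+(1-\lambda)y_2$. Applying the convexity of $B_{f,p}$ coordinatewise to the pairs of points $(x_i,y_1)$ and $(z_i,y_2)$ in $I^2$, summing over $i$, and invoking $\sum_i B_{f,p}(x_i,y_1)=\sum_i B_{f,p}(z_i,y_2)=0$ from the representation, I obtain $\sum_i B_{f,p}(v_i,w)\le 0$, whence $\B_{f,p}(v)\le w=\lambda\B_{f,p}(x)+(1-\lambda)\B_{f,p}(z)$. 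Thus $\B_{f,p}$ is convex.

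The necessity is the hard direction, and it is where I expect the real work to lie. The natural idea is to reverse the previous computation: to verify the convexity inequality for $B_{f,p}$ at two arbitrary points $(a_1,b_1),(a_2,b_2)\in I^2$, I would realize each $b_j$ as the mean $\B_{f,p}(x^{(j)})$ of data $x^{(j)}$ having $a_j$ among its coordinates, and then read off $B_{f,p}$-convexity from the convexity of $\B_{f,p}$ through the two displayed equivalences. Two genuine obstacles arise. First, prescribing the value $b_j$ as a mean while keeping $a_j$ as a coordinate forces one to adjoin auxiliary points, and a naive choice reintroduces exactly the convexity of $B_{f,p}$ that one wants to prove; isolating the two relevant terms from the auxiliary ones should instead be achieved by a perturbation or limiting argument in which the auxiliary contributions enter only through a controllable linear term. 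Second, and more seriously, neither $p$ nor $f'$ is assumed continuous, so none of the maps involved is a priori regular enough to run such constructions. I therefore expect the crux to be a regularity bootstrap: the convexity (hence continuity) of $\B_{f,p}$ on $I^2$ must first be shown to force the continuity, and then the higher regularity, of $p$ (the quantity $f(\B_{f,p}(x_1,x_2))$ is an explicit Möbius-type expression in $p(x_1)$, from which continuity of $p$ can be recovered). Once enough regularity is available one can pass to the infinitesimal level, comparing the second-order behaviour of $\B_{f,p}$ along the diagonal with the Hessian of $B_{f,p}$ and using that convex functions are twice differentiable almost everywhere, which is the cleanest route to closing the equivalence. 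Establishing this regularity, together with the non-circular isolation of the two relevant terms, is in my view the main difficulty of the proof.
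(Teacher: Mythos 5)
First, a point of orientation: the paper contains no proof of this statement---it is quoted as a known result of Losonczi \cite{Los71a}, and the whole point of the paper (Theorem~\ref{thm:Bconv}) is to reprove and strengthen it without the regularity hypotheses. So your attempt can only be compared with the machinery the paper builds for that purpose, namely Theorem~\ref{PZT} and the proof of Theorem~\ref{thm:QDconv}. Within that frame, your preparatory reformulation is correct: dividing the defining equation by the nonvanishing quantity $p(y)f'(y)$ does give $y=\B_{f,p}(x_1,\dots,x_n)$ iff $\sum_i B_{f,p}(x_i,y)=0$, and your sign analysis showing $\B_{f,p}(v)\le w$ iff $\sum_i B_{f,p}(v_i,w)\le 0$ is valid in both monotonicity cases. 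The sufficiency direction you derive from it (convexity of $B_{f,p}$ implies convexity of the mean) is complete and correct; it is in substance the easy half of Theorem~\ref{PZT} with $a(u,v)$, $b(u,v)$ chosen as the appropriate ratios of $pf'$ evaluated at $u$, $v$ and at the convex combination.

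The genuine gap is the necessity, which you explicitly leave as a plan rather than a proof. Your proposed route---realizing each $b_j$ as a mean of data containing $a_j$ and then cancelling the auxiliary coordinates---is, as you yourself note, circular in its naive form, and you do not supply the perturbation or limiting device that would break the circularity. The mechanism that actually works is different: one first invokes a separation-type characterization (Theorem~\ref{PZT}, from \cite[Theorem 11]{Pal88a}) asserting that convexity of the quasideviation mean is \emph{equivalent} to the existence of coefficient functions $a,b$ satisfying \eq{ab}; one then pins these coefficients down by substitutions ($y:=x$, $v:=u$ gives $c(u,u)=\tfrac12$ and convexity of $E$ in its first variable; $y:=v$ followed by the one-sided limits $x\to u^{\pm}$ identifies $c(u,v)$ with a ratio of diagonal partial derivatives), which converts \eq{ab} into the Jensen convexity of $E^+=B_{f,p}$, upgraded to convexity by Bernstein--Doetsch. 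Your instinct that a regularity bootstrap for $p$ and $f'$ is needed is sound (this is exactly what the proof of Theorem~\ref{thm:Bconv} carries out), but without the separation theorem or an equivalent substitute, the hard direction of the statement remains unproved in your proposal.
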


The notions of deviations and quasideviations were introduced by Daróczy in \cite{Dar71b} and by Páles \cite{Pal82a}, respectively. In what follows, we recall Definition 2.1 and Theorem 2.1 from the paper \cite{Pal82a}. A two-variable function $E:I^2\to\R$ will be called a \emph{quasideviation} if $E$ possesses the following three properties:
\begin{enumerate}[(D1)]
 \item For all $x,u\in I$, the equality $\sign E(x,u)=\sign(x-u)$ holds.
 \item For all $x\in I$, the mapping $I\ni u\mapsto E(x,u)$ is continuous.
 \item For all $x,y\in I$ with $x<y$, the mapping
 \Eq{*}{
   (x,y)\ni u\mapsto\frac{E(x,u)}{E(y,u)}
 }
 is strictly decreasing.
\end{enumerate}
We say that $E$ is a \emph{deviation} (cf.\ \cite{Dar71b}) if $E$ possesses properties (D1), (D2) and, instead of (D3), the following condition 
\begin{enumerate}[(D1)]
 \item[(D3')] For all $x\in I$, the mapping $I\ni u\mapsto E(x,u)$ is strictly decreasing.
\end{enumerate}
It is not difficult to show that every deviation is also a quasideviation. In order to introduce quasideviation means, the following statement is instrumental (cf.\ \cite[Theorem 2.1]{Pal82a}).

\begin{Athm} Let $E:I^2\to\R$ be a quasideviation. Then, for all $n\in\N$ and $x_1,\dots,x_n\in I$, there exists a unique element $u\in I$ such that
\Eq{E(u)}{
  E(x_1,u)+\dots+E(x_n,u)=0.
}
Furthermore, $\min(x_1,\dots,x_n)<u<\max(x_1,\dots,x_n)$ unless $x_1=\dots=x_n$.
\end{Athm}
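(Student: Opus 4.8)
The plan is to fix $x_1,\dots,x_n\in I$, assume without loss of generality that they are labelled increasingly, and set $m:=\min(x_1,\dots,x_n)$, $M:=\max(x_1,\dots,x_n)$ and $\Phi(u):=\sum_{k=1}^{n}E(x_k,u)$ for $u\in I$. If $x_1=\dots=x_n$, then $\Phi(u)=nE(x_1,u)$, so by (D1) the unique zero is $u=x_1$, which is the common value; this settles the degenerate case, the ``unless'' clause being vacuous there. So assume $m<M$. Existence is the easy part: by (D2) the map $\Phi$ is continuous on $[m,M]\subseteq I$, while (D1) gives $E(x_k,m)\ge 0$ with strict inequality for those $k$ with $x_k>m$, whence $\Phi(m)>0$, and symmetrically $\Phi(M)<0$. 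The intermediate value theorem then produces a zero, and the same sign computation (for $u\le m$ all terms are $\ge0$ with one strictly positive, for $u\ge M$ dually) shows that \emph{every} zero of $\Phi$ lies in the open interval $(m,M)$, which is the desired strict localization.

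For uniqueness --- the heart of the statement and the only place where (D3) enters --- I would prove the sharper fact that \emph{if $\Phi(u)=0$ and $u<v$ with $u,v\in(m,M)$, then $\Phi(v)<0$}; two distinct zeros would then contradict this. Fix such $u<v$ and split the indices by the position of $x_k$ relative to $[u,v]$: let $N:=\{k:x_k<u\}$ and $P:=\{k:x_k>v\}$, both nonempty since $1\in N$ and $n\in P$. The remaining indices satisfy $u\le x_k\le v$, hence $E(x_k,v)\le 0$ by (D1), so they can only lower $\Phi(v)$ and may be discarded. Separating signs in $\Phi(u)=0$ via (D1) gives
\Eq{*}{
  \sum_{k\in N}\big(-E(x_k,u)\big)=\sum_{x_k>u}E(x_k,u)=:S>0,\\
  \sum_{k\in P}E(x_k,u)\le S.
}

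The key step is a pairwise application of (D3). For every $i\in N$ and $j\in P$ we have $x_i<u<v<x_j$, so $u,v\in(x_i,x_j)$ and the strict monotonicity in (D3) gives $E(x_i,u)/E(x_j,u)>E(x_i,v)/E(x_j,v)$; clearing the (signed) denominators turns this into
\Eq{*}{
  \alpha_j:=\frac{E(x_j,v)}{E(x_j,u)}<\frac{E(x_i,v)}{E(x_i,u)}=:\beta_i,
}
where all $\alpha_j,\beta_i$ are positive because numerator and denominator share their sign. Crucially, the single number $c:=\max_{j\in P}\alpha_j$ then satisfies $\alpha_j\le c<\beta_i$ simultaneously for all $i,j$, so that, renormalising against the values at $u$,
\Eq{*}{
  \sum_{k\in P}E(x_k,v)\le c\sum_{k\in P}E(x_k,u)\le cS\\
  {}<\Big(\min_{i\in N}\beta_i\Big)S\le\sum_{k\in N}\big(-E(x_k,v)\big).
}
Hence the positive contributions at $v$ are strictly dominated by the negative ones coming from $N$ alone, and adding back the discarded nonpositive middle terms yields $\Phi(v)<0$, completing the uniqueness proof.

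The main obstacle is precisely this uniqueness argument, and it is genuinely more delicate than for deviations: since a quasideviation need not be monotone in its second variable, one cannot claim that $\Phi$ itself is decreasing, and individual terms $E(x_k,\cdot)$ may even change sign as $u$ crosses some $x_k$. The device that circumvents this is the observation that (D3) compares positive and negative terms \emph{only through their ratios}, so normalising everything by the configuration at $u$ (where the sum already balances to $S$) lets a single threshold $c$ bound the positive part from above and the negative part from below at once; the sign-flipping middle terms then cost nothing because they only push $\Phi(v)$ further below zero. Verifying the sign bookkeeping in the passage from (D3) to $\alpha_j<\beta_i$, and checking the boundary cases $x_k\in\{u,v\}$, are the only routine points that remain.
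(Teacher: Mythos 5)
This statement is Theorem~F of the paper, which is recalled from \cite[Theorem 2.1]{Pal82a} \emph{without proof}, so there is no in-paper argument to compare against; judged on its own, your proof is correct and complete. The endpoint sign analysis via (D1) and (D2) gives existence and the strict localization in $(\min,\max)$, and your uniqueness device — normalizing the positive and negative parts of the balanced sum at $u$ by their values there and using (D3) pairwise to obtain a single threshold $c=\max_{j\in P}\alpha_j$ with $\alpha_j\le c<\beta_i$ for all $i\in N$, $j\in P$ — is exactly what is needed, since $\Phi$ itself need not be monotone in the second variable. The sign bookkeeping checks out: from $E(x_i,u)/E(x_j,u)>E(x_i,v)/E(x_j,v)$ one multiplies by the negative quantity $E(x_j,v)/E(x_i,u)$, which reverses the inequality to $\alpha_j<\beta_i$, and the indices with $u\le x_k\le v$ contribute nonpositively to $\Phi(v)$, so discarding them only strengthens the conclusion $\Phi(v)<0$.
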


For $n\in\N$ and $x_1,\dots,x_n\in I$, the solution $u$ of equation \eq{E(u)} is called the \emph{$E$-quasideviation mean of $x_1,\dots,x_n$} and will be denoted by $\D_E(x_1,\dots,x_n)$.

If $f:I\to\R$ is strictly increasing and $p:I\to\R$ is continuous then $E(x,u):=p(x)(f(x)-f(u))$ is a deviation (and also a quasideviation) and a simple computation yields that $\D_E=\B_{f,p}$.

We say that a quasideviation $E\colon I\times I\to\R$ is \emph{normalizable} (cf.\ \cite{Dar71b}) if, for all $x\in I$, the function $u\mapsto E(x,u)$ is differentiable at $x$ and the mapping $x\mapsto\partial_2E(x,x)$ is strictly negative and continuous on $I$. The \emph{normalization} $E^*\colon I\times I\to\R$ of $E$ is defined by 
\Eq{*}{
  E^*(x,u):=-\frac{E(x,u)}{\partial_2E(u,u)}.
}
If $E$ is normalizable, then $E^*$ is also a quasideviation, $\D_E=\D_{E^*}$ and $\partial_2E^*(x,x)=-1$, therefore $(E^*)^*=E^*$.

In the class of deviation means generated by normalizable quasideviations the characterization of the Jensen convexity  follows from a general result of Daróczy \cite{Dar71b}.

\begin{Athm}
Let $E:I\times I\to\R$ be a normalizable quasideviation. Then $\D_E$ is convex if and only if $E^*$ is convex on $I\times I$.
\end{Athm}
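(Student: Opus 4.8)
The plan is to normalize first and then prove the two implications separately: the ``sufficiency'' by a one-line summation and the ``necessity'' by a limiting construction that concentrates auxiliary nodes on the diagonal. Since $\D_E=\D_{E^*}$ and $(E^*)^*=E^*$, it suffices to treat a normalized quasideviation $F:=E^*$, for which $F(x,x)=0$ and $\partial_2F(x,x)=-1$ hold for every $x\in I$, and to show that $\D_F$ is convex if and only if $F$ is convex. The recurring elementary tool is a sign-change principle that follows at once from the existence–uniqueness theorem quoted above together with (D1)–(D2): for fixed $z_1,\dots,z_n\in I$ the continuous function $g(r):=\sum_{i=1}^nF(z_i,r)$ is strictly positive below its unique zero $\D_F(z_1,\dots,z_n)$ and strictly negative above it, so that $g(r)\le0$ already forces $r\ge\D_F(z_1,\dots,z_n)$.

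For the implication ``$F$ convex $\Rightarrow\D_F$ convex'', fix $x,y\in I^n$ and set $u:=\D_F(x)$, $v:=\D_F(y)$. Applying the convexity of $F$ coordinatewise and summing gives
\Eq{*}{
\sum_{i=1}^nF\Big(\tfrac{x_i+y_i}{2},\tfrac{u+v}{2}\Big)\le\tfrac12\sum_{i=1}^n\big(F(x_i,u)+F(y_i,v)\big)=0,
}
whence the sign-change principle yields $\D_F\big(\tfrac{x+y}{2}\big)\le\tfrac{u+v}{2}$. Carrying out the same computation with an arbitrary weight $\lambda\in[0,1]$ in place of $\tfrac12$ delivers the full (not merely Jensen) convexity of $\D_F$.

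The converse ``$\D_F$ convex $\Rightarrow F$ convex'' is the crux. Fix $(p,s),(q,t)\in I\times I$; the target is $2F\big(\tfrac{p+q}{2},\tfrac{s+t}{2}\big)\le F(p,s)+F(q,t)$. For each large $m$ I would build an $(m+1)$-tuple $\xi=(p,c,\dots,c)$ with $\D_F(\xi)=s$ and an $(m+1)$-tuple $\eta=(q,c',\dots,c')$ with $\D_F(\eta)=t$; since $\D_F(p,s,\dots,s)\to s$ as $m\to\infty$, a small adjustment of the repeated node (legitimate by the continuity of the mean) produces such $c=c_m\to s$ and $c'=c_m'\to t$. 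Pairing $p$ with $q$ and the repeated auxiliary nodes with one another, the Jensen convexity of the $(m+1)$-variable mean together with the sign-change principle gives
\Eq{*}{
F\Big(\tfrac{p+q}{2},\tfrac{s+t}{2}\Big)+m\,F\Big(\tfrac{c_m+c_m'}{2},\tfrac{s+t}{2}\Big)\le0.
}
Because $\tfrac{c_m+c_m'}{2}\to\tfrac{s+t}{2}$, every surviving evaluation sits near the diagonal point $(\tfrac{s+t}{2},\tfrac{s+t}{2})$, so the normalization yields the first-order expansion $F(a,b)=(a-b)+o(|a-b|)$ there; combined with the balance relations $m\,(c_m-s)\to-F(p,s)$ and $m\,(c_m'-t)\to-F(q,t)$ extracted from $\D_F(\xi)=s$ and $\D_F(\eta)=t$, the second term converges to $-\tfrac12\big(F(p,s)+F(q,t)\big)$, and the displayed inequality passes in the limit to the midpoint convexity of $F$.

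The step I expect to be genuinely delicate is exactly this passage to the limit: after multiplying the $o(\cdot)$ remainder by $m$ and summing, one must still force the aggregate error to vanish, which requires the expansion $F(a,b)=(a-b)+o(|a-b|)$ to hold \emph{uniformly} as the base point runs along the diagonal. This uniformity is precisely what normalizability supplies, through the differentiability of $F$ along the diagonal and the continuity of $x\mapsto\partial_2E(x,x)$; the same regularity is what legitimizes the clustering construction. Finally, the argument produces only the midpoint (Jensen) form of the equivalence, so to recover genuine convexity as stated I would invoke the Bernstein–Doetsch theorem, upgrading the midpoint convexity of $F$ to convexity by means of the regularity of $F$ furnished by (D2) and normalizability.
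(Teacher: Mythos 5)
First, a point of reference: the paper does not prove this statement at all — it is quoted as a known result of Daróczy \cite{Dar71b} — so your attempt must be judged on its own terms. Your sufficiency direction ($E^*$ convex $\Rightarrow\D_E$ convex) is correct and is the standard summation argument; the sign-change principle you isolate is valid by (D1), (D2) and the uniqueness of the zero. The necessity direction, however, contains a genuine gap, and it sits exactly where you suspected, except that normalizability does \emph{not} repair it. Your limits $m(c_m-s)\to -F(p,s)$ and $m\,F\big(\tfrac{c_m+c_m'}{2},\tfrac{s+t}{2}\big)\to -\tfrac12\big(F(p,s)+F(q,t)\big)$ rest on the expansion $F(a,b)=(a-b)+o(|a-b|)$ with the \emph{first} argument tending to the diagonal and the second held fixed, i.e., on $\partial_1F(w,w)=1$. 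Normalizability controls only the \emph{second}-variable derivative on the diagonal: it yields $F(x,u)=(x-u)+o(|u-x|)$ as $u\to x$ for each fixed $x$, pointwise rather than uniformly, and it says nothing about $\partial_1F(w,w)$, which need not exist, let alone equal $1$. (For the same reason, the exact solvability of $F(c_m,s)=-F(p,s)/m$ presupposes continuity of $F$ in its first variable, which is not among the hypotheses.) So the central limit computation does not follow from the stated assumptions, and the appeal to "uniform differentiability along the diagonal" is not available: pointwise differentiability plus continuity of $x\mapsto\partial_2E(x,x)$ does not give a uniform remainder.

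The construction should be dualized, after which your architecture works. Instead of solving $\D_F(p,c,\dots,c)=s$ for an auxiliary node $c$, evaluate the mean on $\xi:=(s,\dots,s,p)$ with $m$ copies of $s$. The defining equation $mF(s,u_m)+F(p,u_m)=0$, property (D2), and the normalization $\partial_2F(s,s)=-1$ force $u_m\to s$ and $m(u_m-s)\to F(p,s)$, i.e., $\D_F(\xi)=s+\tfrac1m F(p,s)+o(\tfrac1m)$; this is an expansion at the \emph{fixed} diagonal point $s$ in the \emph{second} variable, which is precisely what normalizability supplies, with no uniformity required. Performing the same expansion for $\eta:=(t,\dots,t,q)$ and for $\tfrac{\xi+\eta}{2}=(\tfrac{s+t}{2},\dots,\tfrac{s+t}{2},\tfrac{p+q}{2})$, multiplying the Jensen inequality $\D_F(\tfrac{\xi+\eta}{2})\le\tfrac12\big(\D_F(\xi)+\D_F(\eta)\big)$ by $m$ and letting $m\to\infty$ gives the midpoint convexity of $F$ directly; your concluding Bernstein--Doetsch upgrade, using that $F\le 0$ on the open set $\{(x,u): x<u\}$, then legitimately converts this into convexity. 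With that single replacement the proof goes through.
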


Without assuming normalizability, Páles \cite[Theorem~11]{Pal88a} obtained a general theorem, which implies the following result.

\begin{Athm}\label{PZT}
Let $E:I\times I\to\R$ be a quasideviation. Then $\D_E$ is convex if and only if there exist two functions $a,b:I\times I\to\R$ such that, for all $x,y,u,v\in I$,
\Eq{ab}{
  E\Big(\frac{x+y}{2},\frac{u+v}{2}\Big)
  \leq a(u,v)E(x,u)+b(u,v)E(y,v).
}
\end{Athm}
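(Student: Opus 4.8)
The plan is to base both implications on a single comparison principle, with the real work falling on necessity. The comparison lemma I would prove first is this: for all $z_1,\dots,z_n\in I$ and all $t\in I$, one has $\D_E(z_1,\dots,z_n)\le t$ exactly when $E(z_1,t)+\dots+E(z_n,t)\le0$, with the analogous equivalences for $\ge$ and $=$. Indeed, by (D2) the function $\Phi(t):=\sum_{i=1}^n E(z_i,t)$ is continuous, by the existence--uniqueness theorem quoted above it vanishes at exactly one point $w:=\D_E(z)$, and by (D1) it is positive for $t$ slightly below $\min_i z_i$ and negative for $t$ slightly above $\max_i z_i$ (such $t$ lie in $I$ since $I$ is open). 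The intermediate value theorem then forbids any further sign change, so $\Phi>0$ to the left of $w$ and $\Phi<0$ to its right, which is the assertion.

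Granting this lemma, sufficiency of \eq{ab} is immediate. Fix $n$ and $x,y\in I^n$, put $u:=\D_E(x)$, $v:=\D_E(y)$ and $m:=\frac{u+v}{2}$. Applying \eq{ab} to each coordinate quadruple $(x_i,y_i,u,v)$ and summing over $i$ gives
\[
\sum_{i=1}^n E\Big(\frac{x_i+y_i}{2},m\Big)\le a(u,v)\sum_{i=1}^n E(x_i,u)+b(u,v)\sum_{i=1}^n E(y_i,v)=0,
\]
since both inner sums vanish by the definitions of $u$ and $v$; note that no sign restriction on $a,b$ is used. By the comparison lemma this is precisely $\D_E\big(\frac{x+y}{2}\big)\le m=\frac{\D_E(x)+\D_E(y)}{2}$, the desired Jensen convexity.

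For necessity, assume $\D_E$ is Jensen convex and fix $u,v\in I$, $m=\frac{u+v}{2}$. Putting $y=v$ and then $x=u$ in \eq{ab} forces one-variable bounds that single out the candidates
\[
a(u,v):=\sup_{x\ne u}\frac{E\big(\frac{x+v}{2},m\big)}{E(x,u)},\qquad b(u,v):=\sup_{y\ne v}\frac{E\big(\frac{u+y}{2},m\big)}{E(y,v)},
\]
each ratio being a quotient of like-signed quantities by (D1), hence positive. To reach the genuinely two-variable inequality I would first upgrade Jensen convexity to arbitrary positive weights: replicating entries yields Jensen convexity of the weighted mean defined by $\sum_j\lambda_j E(z_j,t)=0$ for rational weights, and then, using (D2) and (D3) to guarantee continuous dependence of the weighted mean on its weights, for all positive weights. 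Testing this on two-point configurations with weights $(\lambda,1-\lambda)$ whose weighted means are $u$ and $v$, the comparison lemma converts the weighted Jensen inequality into
\[
\lambda E\Big(\frac{x+y}{2},m\Big)\le-(1-\lambda)E\Big(\frac{\xi+\eta}{2},m\Big),
\]
where $\xi,\eta$ are the balancing points determined by $\lambda E(x,u)+(1-\lambda)E(\xi,u)=0$ and $\lambda E(y,v)+(1-\lambda)E(\eta,v)=0$.

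The crux --- and the reason the theorem resists the classical regularity-based proofs --- is to let $\lambda\to0$ in this relation and recover \eq{ab} with the coefficients above. As $\lambda\to0$ the balancing points satisfy $\xi\to u$ and $\eta\to v$, so the right-hand side is an indeterminate product of the diverging factor $(1-\lambda)/\lambda$ against a vanishing $E$-value, and everything hinges on controlling this product using only (D2) and the strict-monotonicity estimate (D3), with no differentiability of $E$ available. I expect this to be the main obstacle: one must show both that the defining suprema of $a$ and $b$ are finite and that, in the limit, the joint bound decouples into the two diagonal bounds, presumably by comparing $\xi$ and $\eta$ monotonically against the one-variable balancing points entering the definitions of $a(u,v)$ and $b(u,v)$. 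Once \eq{ab} is secured for every fixed pair $(u,v)$, the functions $a,b\colon I\times I\to\R$ are constructed and the theorem is proved.
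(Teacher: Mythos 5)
You should first be aware that the paper does not prove this statement at all: it is imported from \cite[Theorem~11]{Pal88a}, so there is no internal proof to compare against. Your comparison lemma (the sign characterization $\D_E(z)\le t\Leftrightarrow\sum_i E(z_i,t)\le0$) and the sufficiency direction built on it are correct and complete: with $u:=\D_E(x)$, $v:=\D_E(y)$, summing \eq{ab} over the coordinates annihilates both right-hand sums regardless of the signs of $a$ and $b$, and the lemma converts the resulting inequality into Jensen convexity.

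The necessity direction, however, is a programme rather than a proof, and you say so yourself (``I expect this to be the main obstacle''). Two concrete problems. First, your balancing points need not exist: you require some $\xi$ with $E(\xi,u)=-\tfrac{\lambda}{1-\lambda}E(x,u)$, but a quasideviation is only assumed continuous in its \emph{second} variable, so $\xi\mapsto E(\xi,u)$ need not have interval range. For instance, $E(x,u)=p(x)(f(x)-f(u))$ with $f$ continuous and increasing and $p>0$ badly discontinuous satisfies (D1)--(D3), yet the required $\xi$ can fail to exist for every small $\lambda$. (Continuity and convexity of $E(\cdot,u)$ do eventually follow from Jensen convexity of $\D_E$, but in this paper they are \emph{deduced from} Theorem~\ref{PZT}, so invoking them here would be circular.) Second, even granting the balancing points, you provide no control of the indeterminate product $\tfrac{1-\lambda}{\lambda}E\big(\tfrac{\xi+\eta}{2},m\big)$ as $\lambda\to0$, and no proof that the suprema defining $a(u,v)$ and $b(u,v)$ are finite --- which is exactly where all the difficulty of the theorem is concentrated. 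A route that actually closes the gap (and is, in substance, the one in \cite{Pal88a}) avoids these limits altogether: for each fixed pair $(u,v)$, the inequality \eq{ab} is an infinite system of \emph{linear} inequalities in the two unknowns $a(u,v),b(u,v)$, so by Helly's theorem in the plane it suffices to solve every three-member subsystem, and that finite solvability is extracted from the Jensen convexity of $\D_E$ applied to finite configurations with repeated entries. As it stands, only the ``if'' half of the statement is established by your proposal.
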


Motivated by the characterization theorem about the Jensen convexity of quasiarithmetic means, our aim is to establish a characterization of the Jensen convexity of quasideviation and Bajraktarević means without \emph{any additional regularity assumptions}, that is to generalize Theorems E and G. The main starting point of our approach will be Theorem H.

\section{Main results}

The following auxiliary result will be needed in the sequel.

\begin{lem}\label{lem:max}
Let $I \subset \R$ be an open interval and $f \colon I \to\R$. If $f(\frac{x+y}{2})\le f(x)$ for all $x,y \in I$, then $f$ is constant. 
\end{lem}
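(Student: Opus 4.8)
The plan is to show that $f$ is locally constant and then invoke the connectedness of $I$. The key observation is a reflection trick: for fixed $a,b\in I$, substituting $x:=b$ and $y:=2a-b$ into the hypothesis (which is legitimate provided $2a-b\in I$) makes the midpoint $\frac{x+y}{2}$ equal to $a$, and hence yields $f(a)\le f(b)$. Swapping the roles, that is, taking $x:=a$ and $y:=2b-a$ (legitimate provided $2b-a\in I$), makes the midpoint equal to $b$ and yields $f(b)\le f(a)$. Therefore, whenever both reflected points $2a-b$ and $2b-a$ belong to $I$, we can conclude that $f(a)=f(b)$.

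Next I would verify that this conclusion applies to every $b$ sufficiently close to a given $a$. Writing $I=(\alpha,\beta)$ and setting $\delta:=\min(a-\alpha,\beta-a)>0$ (which is positive since $I$ is open and $a$ is an interior point), one checks that $|b-a|<\delta/2$ forces $2a-b$ and $2b-a$ to lie within distance $\delta$ of $a$, hence inside $I$. Consequently $f$ is constant on the neighborhood $(a-\delta/2,a+\delta/2)$ of $a$. Since $a\in I$ was arbitrary, $f$ is locally constant, and as $I$ is connected, $f$ is constant on $I$.

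The argument is essentially elementary; the only point requiring care is the domain constraint that the reflected arguments $2a-b$ and $2b-a$ must remain in $I$. This constraint is automatic only when $I=\R$, in which case $f$ is immediately seen to be constant; for a general interval it restricts the direct comparison to nearby points, which is precisely why the localization step followed by the connectedness argument is needed in place of a single global comparison.
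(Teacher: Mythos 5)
Your proof is correct and is essentially the same as the paper's: the paper uses the identical reflection trick (with $b=x+\delta$, comparing via the pairs $(x+\delta,x-\delta)$ and $(x,x+2\delta)$) to get local constancy, and merely finishes by observing $f'\equiv 0$ instead of invoking connectedness. No issues.
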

\begin{proof}
 Take any element $x$ of $I$ and $\varepsilon>0$ such that $x-2\varepsilon,x+2\varepsilon \in I$. Now, for all $\delta \in (-\varepsilon,\varepsilon)$, we have that $x-\delta,x+\delta,x+2\delta\in (x-2\varepsilon,x+2\varepsilon)\subseteq I$, and 
 \Eq{*}{
f(x)= f\Big(\tfrac{(x+\delta)+(x-\delta)}2\Big)\le f(x+\delta)=f\Big(\tfrac{x+(x+2\delta)}2\Big)\le f(x).
}
Consequently, $f(x)=f(x+\delta)$ for all $\delta \in (-\varepsilon,\varepsilon)$. Thus $f$ is differentiable at $x$ and $f'(x)=0$. 

Since $x$ was arbitrary, we obtain that $f$ is differentiable on $I$ and $f'$ is identically zero on it. This implies that $f$ is constant.
\end{proof}

To simplify the formulation of some of the results and also the proofs, we introduce the following regularity property: A function $f\colon I\to\R$ is called \emph{nearly differentiable} if, at every point of $I$, it has left and right derivatives and the set of those points where these one-sided derivatives are different is at most countable. It is well-known that convex functions admit this regularity property.

\begin{thm}\label{thm:QDconv} Let $E:I^2\to\R$ be a quasideviation. Then the following conditions are equivalent to each other:
\begin{enumerate}[{\rm(i)}]
 \item The quasideviation mean $\D_E$ is Jensen convex.
 \item For all $u\in I$, the map $x\mapsto E(x,u)$ has a positive right-derivative at $x=u$, denoted by $\partial_1^+E(u,u)$, and the mapping $E^+\colon I^2\to\R$ defined by $E^+(x,u):=\frac{E(x,u)}{\partial_1^+E(u,u)}$ is convex on $I^2$.
 \item For all $u\in I$, the map $x\mapsto E(x,u)$ has a positive left-derivative at $x=u$, denoted by $\partial_1^-E(u,u)$, and the mapping $E^-\colon I^2\to\R$ defined by $E^-(x,u):=\frac{E(x,u)}{\partial_1^-E(u,u)}$ is convex on $I^2$.
\end{enumerate}
Moreover, if any of the above equivalent conditions is satisfied then the quasideviation $E$ and quasideviation mean $\D_E$ possess the following properties:
\begin{enumerate}[(a)]
 \item $E$ is continuous on $I^2$.
 \item For all $u\in I$, the function $I\ni x\mapsto E(x,u)$ is convex.
 \item The function $I\ni u\mapsto (\partial_1^-E(u,u),\partial_1^+E(u,u))$ is continuous.
 \item The map $I\ni u \mapsto \frac{\partial_1^+E(u,u)}{\partial_1^-E(u,u)}$ is constant.
 \item $\D_E$ is nonsmaller than the arithmetic mean. 
\end{enumerate}
\end{thm}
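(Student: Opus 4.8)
The plan is to deduce the statement from Theorem~H together with Lemma~\ref{lem:max}, which --- as I will explain --- is tailor-made for the crucial constancy assertion (d). First I would reduce Jensen convexity to genuine convexity: since $\D_E$ is a mean, it is locally bounded (between the min and the max), so by the Bernstein--Doetsch theorem its Jensen convexity is equivalent to convexity. Hence (i) is equivalent to convexity of $\D_E$, and Theorem~H provides functions $a,b\colon I^2\to\R$ with
\Eq{*}{
E\Big(\tfrac{x+y}{2},\tfrac{u+v}{2}\Big)\le a(u,v)E(x,u)+b(u,v)E(y,v)\qquad(x,y,u,v\in I).
}
Putting $v=u$ and then $x=y$ and using (D1) forces $a(u,u)+b(u,u)=1$, and symmetrising in $x,y$ shows that each section $\phi_u:=E(\cdot,u)$ is Jensen convex. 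Upgrading $\phi_u$ to a convex (in particular continuous) function is exactly the point where local boundedness is needed; granting it, convexity of $\phi_u$ is assertion (b). Moreover, since $\phi_u(u)=0$ while $\phi_u$ changes sign at $u$ by (D1), every subgradient of $\phi_u$ at $u$ is strictly positive, so the one-sided derivatives $c^{\pm}(u):=\partial_1^{\pm}E(u,u)$ exist and satisfy $0<c^-(u)\le c^+(u)$.

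Next I would pin down $a$ and $b$. Setting $x=u$ in the displayed inequality and writing $w:=\tfrac{u+v}{2}$ gives $E(w+\tfrac t2,w)\le b(u,v)E(v+t,v)$ for $t$ near $0$; dividing by $t$ and letting $t\to0^{+}$ and $t\to0^{-}$ yields
\Eq{*}{
\frac{c^+(w)}{2c^+(v)}\le b(u,v)\le\frac{c^-(w)}{2c^-(v)},
}
and the symmetric choice $y=v$ gives the analogous bounds for $a(u,v)$ with $v$ replaced by $u$. In particular $\frac{c^+(w)}{c^-(w)}\le\frac{c^+(u)}{c^-(u)}$ and $\frac{c^+(w)}{c^-(w)}\le\frac{c^+(v)}{c^-(v)}$, so the function $g:=c^+/c^-$ satisfies $g\big(\tfrac{u+v}{2}\big)\le g(u)$ for all $u,v\in I$, and Lemma~\ref{lem:max} forces $g$ to be constant --- this is assertion (d).

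With $g$ constant the lower and upper bounds above coincide, so necessarily $b(u,v)=\frac{c^+(w)}{2c^+(v)}$ and, symmetrically, $a(u,v)=\frac{c^+(w)}{2c^+(u)}$. Substituting these into the displayed inequality and dividing by $c^+(w)>0$ turns it into
\Eq{*}{
E^+\Big(\tfrac{x+y}{2},\tfrac{u+v}{2}\Big)\le\tfrac12\big(E^+(x,u)+E^+(y,v)\big),
}
i.e.\ $E^+$ is Jensen convex on $I^2$; since $g$ is constant the same computation with $c^-$ gives Jensen convexity of $E^-$, and a second application of Bernstein--Doetsch promotes these to convexity, proving (ii) and (iii). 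For the remaining regularity, fix any $x_0\ne u_0$: then $E^+(x_0,u_0)\ne0$ by (D1), so near $u_0$ the quantity $c^+(u)=E(x_0,u)/E^+(x_0,u)$ is a quotient of a function continuous in $u$ (by (D2)) and the continuous, nonvanishing $E^+$, whence $c^+$ (and $c^-=c^+/\mathrm{const}$) is continuous, which is (c); consequently $E=c^+\cdot E^+$ is continuous, giving (a). Finally, (e) follows by applying Jensen's inequality to the convex section $E(\cdot,u)$ in the defining equation $\sum E(x_i,u)=0$: with $\bar x$ the arithmetic mean one gets $E(\bar x,u)\le\frac1n\sum E(x_i,u)=0$, so by (D1) $\bar x\le u=\D_E(x)$.

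The converse implications (ii)$\Rightarrow$(i) and (iii)$\Rightarrow$(i) are straightforward: given convexity (hence Jensen convexity) of $E^+$, set $a(u,v):=\frac{c^+(w)}{2c^+(u)}$ and $b(u,v):=\frac{c^+(w)}{2c^+(v)}$ and read the Jensen inequality for $E^+$ backwards to recover exactly condition \eq{ab} of Theorem~H, which yields convexity and a fortiori Jensen convexity of $\D_E$; the argument for $E^-$ is identical. The step I expect to be genuinely delicate is the local-boundedness input to Bernstein--Doetsch, both for the one-dimensional sections $\phi_u$ and for the two-variable maps $E^{\pm}$. Here the natural tool is property (D2), which forces $E(x,u)\to E(x,x)=0$ as $u\to x$ and thus controls $E$ near the diagonal, combined with the Jensen convexity already established; making this quantitative enough to bound $E$ (equivalently $E^{\pm}$) above on compact subsets of $I^2$ is the technical crux of the proof.
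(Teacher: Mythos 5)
Your argument follows the paper's proof almost step for step: reduce to Theorem~\ref{PZT}, derive Jensen convexity of the sections $E(\cdot,u)$, pin down the coefficient functions by one-sided difference quotients at the diagonal, invoke Lemma~\ref{lem:max} to get constancy of $u\mapsto\partial_1^+E(u,u)/\partial_1^-E(u,u)$, squeeze to obtain the Jensen inequality for $E^{\pm}$, and upgrade via Bernstein--Doetsch. Every step you actually carry out is correct, and two of your choices are mild improvements in presentation: you work with $a$ and $b$ directly rather than first symmetrizing them into a single function $c$, and your derivation of (e) by applying Jensen's inequality to the convex section $E(\cdot,u)$ in the defining equation $\sum_i E(x_i,u)=0$ is more self-contained than the paper's appeal to an external support-function criterion.

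The one genuine gap is precisely the step you defer at the end as ``the technical crux'': supplying the boundedness hypothesis for Bernstein--Doetsch, both for the one-variable sections and for $E^{\pm}$ on $I^2$. The route you sketch --- using (D2) to control $E$ near the diagonal and then bounding $E$ above on compact subsets of $I^2$ --- is both more than the theorem requires (boundedness above on a single nonempty open set suffices) and not something (D2) can deliver, since (D2) only gives continuity in the second variable and says nothing about joint local boundedness. The observation that closes the gap is a one-liner from (D1): since $\sign E(x,u)=\sign(x-u)$, the function $E$ (hence also $E^{+}$ and $E^{-}$) is bounded above by $0$ on the open set $\{(x,u)\in I^2 : x<u\}$, and each section $E(\cdot,u)$ is bounded above by $0$ on the nonempty open interval $(-\infty,u)\cap I$. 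This is exactly what the paper does; with that line inserted your proof is complete and coincides with the published one.
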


\begin{proof}
Now assume that the condition (i) is satisfied.
In view of Theorem~\ref{PZT}, we know that the quasideviation mean $\D_E$ is Jensen convex if and only if there exists two functions $a,b \colon I^2 \to \R$ such that \eq{ab} holds.
Interchanging the pair $(x,u)$ with $(y,v)$, it follows that
\Eq{*}{
E\Big(\frac{x+y}2,\frac{u+v}2\Big)\le a(v,u) E(y,v)+b(v,u)E(x,u),
\qquad x,y,u,v \in I.
}
Adding up the above inequality to \eq{ab} side by side, we get
\Eq{E1}{
E\Big(\frac{x+y}2,\frac{u+v}2\Big)\le c(u,v) E(x,u)+c(v,u)E(y,v),
\qquad x,y,u,v \in I,
}
where $c\colon I^2\to\R$ is defined by
\Eq{*}{
  c(u,v):=\frac{a(u,v)+b(v,u)}{2},\qquad u,v \in I.
}
Putting $y:=x$ and $v:=u$ into \eq{E1} we obtain
\Eq{*}{
E(x,u)\le 2c(u,u) E(x,u),\qquad x,u \in I.
}
Then, by property (D1) of quasideviations, we get that, for each $u \in I$, the factor $E(\cdot,u)$ takes both positive and negative values, thus we can conclude that
\Eq{*}{
c(u,u)=\frac12,\qquad u \in I.
}
In the next step, substituting $u:=v$ into \eq{E1}, we get 
\Eq{*}{
E\Big(\frac{x+y}2,u\Big)\le \frac{E(x,u)+E(y,u)}2, \qquad x,y,u \in I.
}
Therefore, for each fixed $u\in I$, the function $I \ni x \mapsto E(x,u)$ is Jensen convex. On the other hand, this function is bounded from above by $0$ on the open interval $(-\infty,u)\cap I$. Thus, in view of the Bernstein-Doetsch Theorem \cite{BerDoe15}, it is convex. As a consequence, all such maps are nearly differentiable. Thus, for all $(x,u) \in I^2$, the one-sided partial derivatives $\partial_1^+E(x,u)$ and $\partial_1^-E(x,u)$ exist. By applying property (D1) of quasideviations, we also get 
\Eq{E:partial+}{
 \partial_1^+E(u,u)\geq\partial_1^-E(u,x)>0,\qquad u\in I.
}

Next, putting $y:=v$ into \eq{E1} we get 
\Eq{*}{
E\Big(\frac{x+v}2,\frac{u+v}2\Big)\le c(u,v) E(x,u),\qquad x,u,v \in I.
}
Now, using (D1), we can obtain the double inequality
\Eq{E3}{
\frac{E\big(\frac{x_2+v}2,\frac{u+v}2\big)}{E(x_2,u)}\le c(u,v) \le \frac{E\big(\frac{x_1+v}2,\frac{u+v}2\big)}{E(x_1,u)}, 
\quad x_1,x_2,u,v \in I\text{ with }x_1<u<x_2.
}
Since $E$ vanishes on the diagonal (by (D1)), we obtain
\Eq{*}{
\lim_{x_1\to u^-}&\frac{E\big(\frac{x_1+v}2,\frac{u+v}2\big)}{E(x_1,u)}\\
&=\frac12\lim_{x_1\to u^-}\frac{E\big(\frac{x_1+v}2,\frac{u+v}2\big)-E\big(\frac{u+v}2,\frac{u+v}2\big)}{\frac{x_1+v}2-\frac{u+v}2}\frac{x_1-u}{E(x_1,u)-E(u,u)}\\
&=\frac{\partial_1^-E(\frac{u+v}2,\frac{u+v}2)}{2\partial_1^-E(u,u)}.
}
Similarly
\Eq{*}{
\lim_{x_2\to u^+}\frac{E\big(\frac{x_2+v}2,\frac{u+v}2\big)}{E(x_1,u)}=\frac{\partial_1^+E(\frac{u+v}2,\frac{u+v}2)}{2\partial_1^+E(u,u)}.
}
Upon taking the limits $x_1 \to u^-$ and $x_2 \to u^+$ in the inequalities \eq{E3}, in view of the just proved equalities, we arrive at
\Eq{E4}{
\frac{\partial_1^+E(\frac{u+v}2,\frac{u+v}2)}{2\partial_1^+E(u,u)}
\le c(u,v) \le \frac{\partial_1^-E(\frac{u+v}2,\frac{u+v}2)}{2\partial_1^-E(u,u)},\qquad u,v \in I.
}
By \eq{E:partial+}, we can rewrite this inequality in the following way
\Eq{*}{
\frac{\partial_1^+E(\frac{u+v}2,\frac{u+v}2)}{\partial_1^-E(\frac{u+v}2,\frac{u+v}2)}
\le \frac{\partial_1^+E(u,u)}{\partial_1^-E(u,u)},\qquad u,v \in I.
}
Applying Lemma~\ref{lem:max} to the function $f(u):=\frac{\partial_1^+E(u,u)}{\partial_1^-E(u,u)}$, we can see that $f$ is constant, thus there exists a constant $\alpha \in \R$  such that $\partial_1^+E(u,u)=\alpha \partial_1^-E(u,u)$ for all $u \in I$.
Obviously $\alpha\geq 1$ since $\partial_1^+E(u,u)\geq\partial_1^-E(u,u)>0$. Therefore,
\Eq{*}{
\frac{\partial_1^+E(\frac{u+v}2,\frac{u+v}2)}{2\partial_1^+E(u,u)}=\frac{\alpha\partial_1^-E(\frac{u+v}2,\frac{u+v}2)}{2\alpha\partial_1^-E(u,u)}=\frac{\partial_1^-E(\frac{u+v}2,\frac{u+v}2)}{2\partial_1^-E(u,u)},\qquad u,v \in I.
}
Consequently, the inequalities in \eq{E4} yield
\Eq{*}{
c(u,v)=\frac{\partial_1^+E(\frac{u+v}2,\frac{u+v}2)}{2\partial_1^+E(u,u)} = \frac{\partial_1^-E(\frac{u+v}2,\frac{u+v}2)}{2\partial_1^-E(u,u)},\qquad u,v \in I.
}
Thus, using \eq{E1}, the inequality \eq{E:partial+} implies the following Jensen convexity-type properties:
\Eq{E5}{
\frac{E(\frac{x+y}2,\frac{u+v}2)}{\partial_1^+E(\frac{u+v}2,\frac{u+v}2)}\le \frac12 \left( \frac{E(x,u)}{\partial_1^+E(u,u)}+\frac{E(y,v)}{\partial_1^+E(u,u)} \right),\qquad x,y,u,v \in I
}
and
\Eq{*}{
\frac{E(\frac{x+y}2,\frac{u+v}2)}{\partial_1^-E(\frac{u+v}2,\frac{u+v}2)}\le \frac12 \left( \frac{E(x,u)}{\partial_1^-E(u,u)}+\frac{E(y,v)}{\partial_1^-E(u,u)} \right),\qquad x,y,u,v \in I.
}
Equivalently, both $E^+$ and $E^-$ are Jensen convex over $I^2$. On the other hand, the function $E$ and hence also $E^+$ and $E^-$ are bounded from above by zero over the open set $\{(x,u)\in I^2\mid x<u\}$. Therefore, in view of Bernstein--Doetsch theorem, the Jensen convexity implies the convexity of both $E^+$ and $E^-$, i.e., the conditions (ii) and (iii) hold, respectively.

To show the converse implications assume that (ii) holds, that is, for all $u\in I$, the map $x\mapsto E(x,u)$ has a positive right-derivative at $x=u$ and $E^+$ is convex over $I^2$. Then \eq{E1} is satisfied with 
\Eq{*}{
c(u,v):=\frac{\partial_1^+E(\frac{u+v}2,\frac{u+v}2)}{2\partial_1^+E(u,u)},
}
which, by applying Theorem~\ref{PZT}, implies that $\D_E$ is Jensen convex. The proof of the implication (iii)$\Longrightarrow$(i) is analogous.

To prove the last statements of the theorem, assume that (i) (and hence (ii), (iii)) holds. As we have seen it in the proof, this implies that $E$ is convex in its first variable, i.e., (b) is valid. We have also verified assertion (d). It follows from (ii) that the function $E^+$ is convex, and hence it is continuous on $I^2$. 

To prove assertion (c), let $u_0\in I$ be fixed and chose $x\in I\setminus\{u_0\}$. Then, using also property (D2) of quasideviations, its follows that the map
\Eq{*}{
  u\mapsto\frac{E(x,u)}{E^+(x,u)}=\partial_1^+E(u,u)
}
is continuous at $u_0$. This proves that the map $I\ni u\mapsto \partial_1^+E(u,u)$ is continuous. Similarly, we can see that the map $I\ni u\mapsto \partial_1^-E(u,u)$ is also continuous and hence assertion (c) is valid. 

Using the equality $E(x,u)=\partial_1^+E(u,u)\cdot E^+(x,u)$, the continuity of $E^+$ (which is a consequence of its convexity) and property (c), we can conclude that assertion (a) is also valid. 

It easily follows from property (b) of the quasideviation $E$, that (e) holds. Indeed, if for all $u\in I$, the function $E(\cdot,u)$ is convex, then there there exists a function $h:I\to\R$ such that
\Eq{*}{
  h(u)(x-u)\leq E(x,u) \qquad(x,u\in I).
}
According to the results of the paper \cite[Theorem 7, condition (iv)]{Pal88a}, it follows that $\D_E$ is nonsmaller than the arithmetic mean. 
\end{proof}

\begin{thm}\label{thm:Eab} Let $E\colon I^2\to\R$ be a quasideviation and $\alpha,\beta \in (0,\infty)$. Define $E_{\alpha,\beta}\colon I^2\to\R$ by
\Eq{Eabdef}{
 E_{\alpha,\beta}(x,u):
 =\begin{cases} 
          \alpha E(x,u) &\text{ for }x\le u, \\[1mm]
          \beta E(x,u) &\text{ for }x>u.
         \end{cases}
}
Then $E_{\alpha,\beta}$ is a quasideviation. If, additionally, $\D_E$ is Jensen convex and $\alpha\leq\beta$, then so is $\D_{E_{\alpha,\beta}}$. \\\indent Furthermore, if $E$ is differentiable in the sense of Gateaux at every point of the diagonal of $I^2$ and the map $u\mapsto \partial_1E(u,u)$ is continuous, then $\D_{E_{\alpha,\beta}}$ is Jensen convex if and only if $\D_E$ is Jensen convex and $\alpha\leq\beta$.
\end{thm}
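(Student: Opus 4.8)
The plan is to treat the three assertions in turn, leaning throughout on the characterization of Jensen convexity furnished by condition~(ii) of Theorem~\ref{thm:QDconv}. First I would check that $E_{\alpha,\beta}$ is a quasideviation by verifying (D1)--(D3). Since $\alpha,\beta>0$ and $E(u,u)=0$, property (D1) is immediate. For (D2) the only delicate point is the continuity of $u\mapsto E_{\alpha,\beta}(x,u)$ at $u=x$, where both one-sided limits vanish because $E$ vanishes on the diagonal. For (D3), if $x<y$ and $u\in(x,y)$ then $x\le u<y$, so the relevant quotient equals $\tfrac{\alpha}{\beta}\cdot\tfrac{E(x,u)}{E(y,u)}$, a positive multiple of a strictly decreasing function.

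For the second assertion, assume $\D_E$ is Jensen convex and $\alpha\le\beta$, and put $\lambda:=\alpha/\beta\in(0,1]$. Computing one-sided derivatives along the diagonal gives $\partial_1^+E_{\alpha,\beta}(u,u)=\beta\,\partial_1^+E(u,u)>0$, so the normalized function $E_{\alpha,\beta}^+$ equals $E^+$ on $\{x\ge u\}$ and $\lambda E^+$ on $\{x\le u\}$. As $E^+\le 0$ on $\{x\le u\}$, this may be written in closed form as $E_{\alpha,\beta}^+=\lambda E^+ +(1-\lambda)\max(E^+,0)$. By Theorem~\ref{thm:QDconv} the map $E^+$ is convex, hence $\max(E^+,0)$ is convex as well, and since $\lambda,1-\lambda\ge0$ the function $E_{\alpha,\beta}^+$ is convex; condition~(ii) then yields the Jensen convexity of $\D_{E_{\alpha,\beta}}$.

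For the third assertion only the necessity requires proof, since its sufficiency is exactly the second assertion. So assume in addition that $E$ is Gateaux differentiable on the diagonal with $\phi(u):=\partial_1E(u,u)$ continuous, and that $\D_{E_{\alpha,\beta}}$ is Jensen convex. Condition~(ii) gives $\partial_1^+E_{\alpha,\beta}(u,u)=\beta\phi(u)>0$, so $\phi>0$, and it gives that $F:=E_{\alpha,\beta}^+$ is convex on $I^2$; as before $F=E^+$ on $\{x\ge u\}$ and $F=\lambda E^+$ on $\{x\le u\}$ with $\lambda=\alpha/\beta$. The one-sided derivatives of the convex function $x\mapsto F(x,u_0)$ at $x=u_0$ equal $1$ from the right and $\lambda$ from the left (here Gateaux differentiability is used to identify $\partial_1^\pm E(u_0,u_0)=\phi(u_0)$), so convexity forces $\lambda\le 1$, i.e. $\alpha\le\beta$. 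Restricting $F$ to the convex half-regions $\{x\ge u\}$ and $\{x\le u\}$ and dividing by the positive constants $1$ and $\lambda$, I also obtain that $E^+$ is convex on each of these two halves.

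Gluing the two halves into global convexity of $E^+$ is the step I expect to be the main obstacle. The key observation is that $E^+$ is Gateaux differentiable at every diagonal point $(u_0,u_0)$: writing $E^+(x,u)=E(x,u)/\phi(u)$ and letting $s\to 0$, the continuity of $\phi$ together with the Gateaux differentiability of $E$ yields the \emph{two-sided} directional derivative $d_1+\tfrac{\partial_2E(u_0,u_0)}{\phi(u_0)}\,d_2$ in every direction $d=(d_1,d_2)$. Hence on any line meeting the diagonal transversally (which it does in at most one point), the restriction of $E^+$ is convex on each side and differentiable at the crossing, so its one-sided derivatives match there and the restriction stays convex across it; lines that miss the diagonal, or that lie in it, are trivial. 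Therefore $E^+$ is convex on $I^2$, and Theorem~\ref{thm:QDconv}(ii) shows that $\D_E$ is Jensen convex, which completes the necessity and the proof.
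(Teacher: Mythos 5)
Your proposal is correct and follows essentially the same route as the paper: the quasideviation check, the reduction to convexity of the normalized function $E^+_{\alpha,\beta}$ via Theorem~\ref{thm:QDconv}(ii), the derivation of $\alpha\le\beta$ from comparing one-sided diagonal derivatives, and the gluing of the two half-plane convexities by showing that the restriction of $E^+$ to any line crossing the diagonal is differentiable at the crossing point (using Gateaux differentiability of $E$ and continuity of $u\mapsto\partial_1E(u,u)$) are all exactly the paper's steps. The only cosmetic difference is that you write $E^+_{\alpha,\beta}=\lambda E^++(1-\lambda)\max(E^+,0)$ where the paper uses $E^+_{\alpha,\beta}=\tfrac1\beta\max(\alpha E^+,\beta E^+)$; both yield convexity for the same reason.
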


\begin{proof} The properties (D1) and (D2) of quasideviations are obviously satisfied. To check (D3), let $x,y\in I$ with $x<y$. Then, for all $u\in(x,y)$, we have
\Eq{*}{
  \frac{E_{\alpha,\beta}(x,u)}{E_{\alpha,\beta}(y,u)}
  =\frac{\alpha E(x,u)}{\beta E(y,u)}.
}
The right hand side is strictly decreasing function of $u$ because $E$ is a quasideviation, therefore, so is the left hand side, which shows that $E_{\alpha,\beta}$ also possess property (D3).

Assume now that $\D_E$ is Jensen convex and $\alpha\leq\beta$. Then,
\Eq{*}{
  E_{\alpha,\beta}(x,u)=\max(\alpha E(x,u),\beta E(x,u)), \qquad(x,u)\in I^2,
}
and, according to condition (ii) of Theorem~\ref{thm:QDconv}, the function $E^+$ is convex over $I^2$. On the other hand, for $(x,u)\in I^2$, 
\Eq{*}{
 E^+_{\alpha,\beta}(x,u)
 &=\frac{E_{\alpha,\beta}(x,u)}{\partial_1^+E_{\alpha,\beta}(u,u)}
 =\frac{\max(\alpha E(x,u),\beta E(x,u))}{\beta\partial_1^+E(u,u)}\\
 &=\frac1\beta\max\big(\alpha E^+(x,u),\beta E^+(x,u)\big).
}
Therefore,
\Eq{E+ab}{
  E^+_{\alpha,\beta}=\frac1\beta\max\big(\alpha E^+,\beta E^+\big),
}
which shows that $E^+_{\alpha,\beta}$ is the maximum of two convex functions, and hence, itself is convex. Thus, condition (ii) of Theorem~\ref{thm:QDconv} holds for $E_{\alpha,\beta}$ and hence $\D_{E_{\alpha,\beta}}$ is Jensen convex.

Now assume that $E$ is differentiable in the sense of Gateaux at every point of the diagonal of $I^2$, the map $I\ni u\mapsto \partial_1E(u,u)$ is continuous and $\D_{E_{\alpha,\beta}}$ is Jensen convex. Then $E^+_{\alpha,\beta}$ is convex. 

To prove that $\alpha\leq\beta$, let $u\in I$ be fixed.
Then we have that
\Eq{*}{
  \alpha\partial_1 E(u,u)=\partial_1^-E_{\alpha,\beta}(u,u)
  \leq \partial_1^+E_{\alpha,\beta}(u,u)=\beta\partial_1 E(u,u).
} 
Since $\partial_1 E(u,u)>0$, it follows that $\alpha\leq\beta$.

The Jensen convexity of $\D_{E_{\alpha,\beta}}$ implies that $E^+_{\alpha,\beta}$ is convex. In view of formula \eq{E+ab}, we can see that $E^+$ is convex on both triangles $\Delta^+:=\{(x,u)\in I^2\mid x\leq u\}$ and $\Delta^-:=\{(x,u)\in I^2\mid x\geq u\}$. To prove that $E^+$ is convex on $I^2=\Delta^+\cup\Delta^-$, it suffices to show that $E^+$ is convex along any line which crosses the diagonal of $I^2$.

Let $u\in I$ be fixed and let $(0,0)\neq(v,w)\in\R^2$ be arbitrary. Then the line $\R\ni t\mapsto (u+tv,u+tw)$ crosses the diagonal of $I^2$ at $(u,u)$. We are going to show that the function $e:T\to\R$ defined by $e(t):=E^+(u+tv,u+tw)$ is convex over the interval $T:=\{t\in\R\mid (u+tv,u+tw)\in I^2\}$. The convexity of $E^+$ over the triangles $\Delta^+$ and $\Delta^-$ implies that $e$ is convex over the subintervals $T_-:=(-\infty,0]\cap T$ and $T_+:=[0,\infty)\cap T$. On the other hand, using the continuity of the map $u\mapsto \partial_1E(u,u)$, we can get that
\Eq{*}{
  \lim_{t\to 0} \frac{e(t)-e(0)}{t}
  &=\lim_{t\to 0} \frac{E^+(u+tv,u+tw)}{t}
  =\lim_{t\to 0} \frac{E(u+tv,u+tw)}{\partial_1 E(u+tw,u+tw)t}\\
  &=\frac{1}{\partial_1 E(u,u)}
   \lim_{t\to 0} \frac{E(u+tv,u+tw)-E(u,u)}{t}.
}
By the Gateaux differentiability assumption on $E$, the limit on the right hand side exists, therefore, $e$ is differentiable at $t=0$.
This property of $e$ together with its convexity over the subintervals $T_-$ and $T_+$ imply that $e$ is convex over $T$. Therefore, we have proved that $E^+$ is convex on $I^2$ and hence, the mean $\D_{E}$ is Jensen convex.
\end{proof}

\begin{cor}
Let $f \colon I \to \R$ be a continuous, strictly increasing function and $\alpha,\beta \in (0,\infty)$ with $\alpha\le \beta$. Then the function $E_{\alpha,\beta} \colon I^2 \to \R$ given by
 \Eq{Edef}{
 E_{\alpha,\beta}(x,u):=\begin{cases} 
          \alpha (f(x)-f(u)) &\text{ for }x\le u; \\
          \beta (f(x)-f(u)) &\text{ for }x>u
         \end{cases}
}
is a quasideviation. 
Furthermore, $\D_{E_{\alpha,\beta}}$ is Jensen convex if and only if $\alpha \le \beta$, $f$ is twice differentiable with a positive derivative and 
\Eq{E:varphi} 
{\text{either $f''$ is nonvanishing and $\frac{f'}{f''}$ is positive and convex or $f''\equiv 0$.}
}
\end{cor}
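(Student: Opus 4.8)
The plan is to reduce the corollary to the three tools already available: Theorem~\ref{thm:Eab} (which compares $\D_{E_{\alpha,\beta}}$ with $\D_E$), Theorem~\ref{thm:B} (the characterization of Jensen convexity of $\A_f$), and the structural consequences (a)--(e) of Theorem~\ref{thm:QDconv}. The relevant base object is the function $E(x,u):=f(x)-f(u)$, which is a deviation whenever $f$ is continuous and strictly increasing and which satisfies $\D_E=\A_f$; the function $E_{\alpha,\beta}$ of \eq{Edef} is precisely $(E)_{\alpha,\beta}$ in the sense of \eq{Eabdef}. Hence the first assertion, that $E_{\alpha,\beta}$ is a quasideviation, is immediate from the first part of Theorem~\ref{thm:Eab}. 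For the sufficiency direction I would first note that the stated regularity of $f$ already forces $f\in C^2$: this is trivial when $f''\equiv0$, while if $f''$ is nowhere zero and $g:=f'/f''$ is positive and convex, then $g$ is continuous and $f''=f'/g$ is a quotient of continuous functions with nonvanishing denominator, hence continuous. Thus the hypotheses on $f$ match those of Theorem~\ref{thm:B}, which yields that $\A_f=\D_E$ is Jensen convex; since $\alpha\le\beta$, Theorem~\ref{thm:Eab} then gives that $\D_{E_{\alpha,\beta}}$ is Jensen convex.

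The necessity direction is the heart of the matter, and the main obstacle is that the converse part of Theorem~\ref{thm:Eab} requires the Gateaux differentiability of $E$ on the diagonal, i.e.\ the differentiability of $f$, which is not assumed and must be extracted from the Jensen convexity itself. To this end I would invoke the structural conclusions of Theorem~\ref{thm:QDconv} for the quasideviation $E_{\alpha,\beta}$. Property (b) states that $x\mapsto E_{\alpha,\beta}(x,u)$ is convex for each fixed $u$; on $(-\infty,u)\cap I$ and on $(u,\infty)\cap I$ this map is, up to an additive constant, a positive multiple of $f$, so letting $u$ vary and patching the resulting overlapping convexity intervals (the one-sided derivative of $f$ is then nondecreasing on all of $I$) shows that $f$ is convex on $I$. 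In particular $f$ has one-sided derivatives everywhere, and from the existence statements in conditions (ii) and (iii) we read off $\partial_1^-E_{\alpha,\beta}(u,u)=\alpha f'_-(u)$ and $\partial_1^+E_{\alpha,\beta}(u,u)=\beta f'_+(u)$, both positive.

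Next I would use property (d), which asserts that $u\mapsto \frac{\beta f'_+(u)}{\alpha f'_-(u)}$ is constant. Since $f$ is convex, $f'_-$ and $f'_+$ agree off an at most countable set, and evaluating the constant ratio at any point of agreement forces its value to be $\beta/\alpha$; feeding this back yields $f'_+(u)=f'_-(u)$ for every $u\in I$, so $f$ is differentiable on $I$, while the inequality $\partial_1^+E_{\alpha,\beta}(u,u)\ge\partial_1^-E_{\alpha,\beta}(u,u)>0$ from Theorem~\ref{thm:QDconv} gives $\alpha\le\beta$. A differentiable convex function on an open interval has a derivative that is monotone and enjoys the intermediate value property, hence is continuous, so $f\in C^1$ and $E$ is Gateaux differentiable along the diagonal with $u\mapsto\partial_1E(u,u)=f'(u)$ continuous. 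This is exactly the regularity needed to apply the converse part of Theorem~\ref{thm:Eab}, which then gives that $\D_E=\A_f$ is Jensen convex. Finally, Theorem~\ref{thm:B} converts the Jensen convexity of $\A_f$ into the asserted regularity of $f$, namely that $f$ is twice differentiable with positive derivative satisfying \eq{E:varphi}, thereby completing the proof. The decisive and most delicate step is the passage from the convexity of $f$ (property (b)) to its differentiability via the constancy of the one-sided-derivative ratio (property (d)); everything downstream of that is a routine invocation of Theorems~\ref{thm:Eab} and~\ref{thm:B}.
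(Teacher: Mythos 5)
Your proof is correct and follows essentially the same route as the paper: the quasideviation property and the sufficiency direction via Theorem~\ref{thm:Eab} combined with Theorem~\ref{thm:B}, and the necessity direction by extracting first the convexity and then the differentiability of $f$ from properties (b) and (d) of Theorem~\ref{thm:QDconv} before invoking the converse part of Theorem~\ref{thm:Eab}. You are in fact slightly more careful than the paper in two places: you explicitly verify the continuity of $u\mapsto\partial_1E(u,u)=f'(u)$ required by the converse part of Theorem~\ref{thm:Eab}, and you note that the stated hypotheses on $f$ already force $f''$ to be continuous, so that Theorem~\ref{thm:B} (which assumes $f\in\calC^{2}$) is applicable.
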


\begin{proof} Define $E:I^2\to\R$ by $E(x,u):=f(x)-f(u)$. Then $E$ is a deviation and hence it is a quasideviation. Thus, by the first statement of Theorem~\ref{thm:Eab}, we can see that $E_{\alpha,\beta}$ is a quasideviation.

Assume first that $\D_{E_{\alpha,\beta}}$ is Jensen convex. Then, by assertion (b) of Theorem~\ref{thm:QDconv}, for all $u\in I$, the map $x\mapsto E_{\alpha,\beta}(x,u)$ is convex on $I$. This implies that $\alpha f-\alpha f(u)$ is convex on $(-\infty,u)\cap I$ for all $u\in I$, and hence, $f$ is convex on $I$. Therefore, $f$ is nearly differentiable. We can now get, for all $u\in I$, that $\partial_1^-E(u,u)=\alpha f'_-(u)$ and $\partial_1^+E(u,u)=\beta f'_+(u)$. In view of assertion (b) of Theorem~\ref{thm:QDconv}, the ratio function 
\Eq{*}{
  u\mapsto \frac{\partial_1^+E(u,u)}{\partial_1^+E(u,u)}
  =\frac{\beta f'_+(u)}{\alpha f'_-(u)}
}
is constant on $I$. Since, except countably many values of $u$, we have that $f'_+(u)=f'_-(u)$, therefore the value of the above ratio equals the constant $\beta/\alpha$. Thus, for all $u\in I$, we obtain that $f'_+(u)=f'_-(u)$, which proves the differentiability of $f$ at every element of $I$. Thus $E$ is also differentiable over $I^2$, it is Gateaux differentiable at the diagonal points of $I^2$. Thus, in view of Theorem~\ref{thm:Eab}, it follows that $\alpha\leq\beta$ and that the mean $\D_E$ is Jensen convex. According to Theorem~\ref{thm:B}, it follows that $\D_E$ is Jensen convex if and only if $f$ is twice differentiable with a positive derivative and \eq{E:varphi} holds.

Now assume to the converse that $E_{\alpha,\beta}$ is of the form \eq{Edef} for some $\alpha,\beta \in(0,+\infty)$ with $\alpha\le \beta$ and a function $f$ which satisfies \eq{E:varphi}. Then, by Theorem~\ref{thm:B}, $\QA{f}=\D_{E}$ is convex and, due to Theorem~\ref{thm:Eab}, so is the mean $\D_{E_{\alpha,\beta}}$. 
\end{proof}

\section{The case of Bajraktarević means}

In what follows, the spaces of $k$ times continuously differentiable functions and $k$ times continuously differentiable functions with a nonvanishing first derivative (which are defined on the open interval $I$) will be denoted by $\calC^k(I)$ and $\calC^{k\#}(I)$, respectively.

\begin{thm}\label{thm:Bconv} Let $f:I\to\R$ be a strictly monotone and continuous function and $p\colon I\to\R_+$ be a positive function. Then the following conditions are equivalent to each other:
\begin{enumerate}[{\rm(i)}]
 \item The Bajraktarević mean $\B_{f,p}$ is Jensen convex.
 \item $f \in \calC^{1\#}(I)$ and the mapping $B_{f,p}\colon I^2\to\R$ defined by 
 \Eq{*}{B_{f,p}(x,u):=\frac{p(x)(f(x)-f(u))}{p(u)f'(u)}} is convex on $I^2$.
 \item $f \in \calC^{2\#}(I)$, $p \in \calC^1(I)$, and for all $x,y,u,v\in I$,
 \Eq{*}{
    \frac{p(y)(f(y)-f(v))}{p(v)f'(v)}
    &\geq \frac{p(x)(f(x)-f(u))}{p(u)f'(u)}
    +\frac{(pf)'(x)-f(u)p'(x)}{(pf')(u)}(y-x)\\
    &\quad+p(x)\frac{(f(u)-f(x))\cdot (pf')'(u)-(pf')(u)\cdot f'(u)}{(pf')(u)^2}(v-u).
 }
 \item $f \in \calC^{2\#}(I)$, $p \in \calC^1(I)$, and for all $x,y,u,v\in I$,
 \Eq{*}{
   &\bigg(\frac{(pf)'(x)-f(u)p'(x)}{(pf')(u)}-\frac{(pf)'(y)-f(v)p'(y)}{(pf')(v)}\bigg)(x-y)\\
   &\qquad+\bigg(p(x)\frac{(f(u)-f(x)) (pf')'(u)-(pf')(u)f'(u)}{(pf')(u)^2}\\&\qquad\qquad-p(y)\frac{(f(v)-f(y))\cdot (pf')'(v)-(pf')(v)\cdot f'(v)}{(pf')(v)^2}\bigg)(u-v)\geq0.
 }
\end{enumerate}
\end{thm}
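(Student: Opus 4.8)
The plan is to recognise $\B_{f,p}$ as the quasideviation mean generated by $E(x,u):=p(x)(f(x)-f(u))$ and to run everything through Theorem~\ref{thm:QDconv}. Since $B_{f,p}=B_{-f,p}$, all four conditions are invariant under the replacement $f\mapsto -f$, so I may assume that $f$ is strictly increasing; then $\sign E(x,u)=\sign(x-u)$, the map $u\mapsto E(x,u)$ is continuous, and for $x<y$ the ratio $u\mapsto \frac{E(x,u)}{E(y,u)}=\frac{p(x)}{p(y)}\cdot\frac{f(x)-f(u)}{f(y)-f(u)}$ is strictly decreasing, so $E$ is a quasideviation with $\D_E=\B_{f,p}$. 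Hence, by Theorem~\ref{thm:QDconv}, condition (i) is equivalent to the existence of the positive one-sided derivative $\partial_1^+E(u,u)$ together with the convexity of $E^+$. Using the moreover-part of that theorem I would extract the first-order regularity: assertion~(a) forces $p$ to be continuous (solve $p(x)=\frac{E(x,u_0)}{f(x)-f(u_0)}$ for a fixed $u_0$ and let $u_0$ vary), whence $\partial_1^\pm E(u,u)=p(u)f'_\pm(u)$; assertion~(c) then yields the continuity of $f'_\pm$, and the elementary fact that a continuous function with a continuous one-sided derivative is continuously differentiable upgrades this to $f\in\calC^{1\#}(I)$ with $f'_+=f'_-=f'$ (positivity coming from $\partial_1^+E(u,u)>0$ and $p>0$). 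Consequently $\partial_1^+E(u,u)=p(u)f'(u)$ and $E^+=B_{f,p}$, which gives the equivalence (i)$\Leftrightarrow$(ii).

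The heart of the argument is the bootstrapping of regularity from (ii), and I expect this to be the main obstacle. Assume $B_{f,p}$ is convex with $f\in\calC^{1\#}(I)$ and $p$ continuous. Fixing $u$, the map $x\mapsto p(x)(f(x)-f(u))$ is convex, so comparing two values of $u$ shows that $p$ possesses one-sided derivatives; since $f$ is differentiable, the jump of the one-sided $x$-derivatives equals $(p'_+(x)-p'_-(x))(f(x)-f(u))$, and because $f(x)$ is an interior point of $f(I)$ the factor $f(x)-f(u)$ takes both signs as $u$ varies, forcing $p'_+=p'_-$, i.e.\ $p$ is differentiable. Writing $B_{f,p}(x,u)=p(x)f(x)g(u)-p(x)h(u)$ with $g:=\frac{1}{pf'}$ and $h:=\frac{f}{pf'}=fg$, convexity in $u$ gives that $u\mapsto(c-f(u))g(u)$ is convex for every $c\in f(I)$; the analogous jump computation produces the factor $(c-f(u))(g'_+(u)-g'_-(u))$, and letting $c$ range over $f(I)$ on both sides of $f(u)$ forces $g$ to be differentiable. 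Then $pf'=1/g$ is differentiable, and since $p$ is differentiable and nonvanishing, $f'=(pf')/p$ is differentiable, so $f$ is twice differentiable. At this point all ingredients of $B_{f,p}$ are differentiable, so $B_{f,p}$ is a convex function that is differentiable at every point of the open set $I^2$; invoking the classical result that such a function is automatically $\calC^1$, the continuity of $\partial_1 B_{f,p}$ yields (after cancelling a common factor and comparing two values of the frozen variable) the continuity of $p'$, and the continuity of $\partial_2 B_{f,p}$ yields that of $(pf')'=p'f'+pf''$, hence of $f''$. This establishes $p\in\calC^1(I)$ and $f\in\calC^{2\#}(I)$.

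Once this regularity is in hand, $B_{f,p}\in\calC^1(I^2)$, and its partial derivatives are exactly the coefficients appearing in (iii) and (iv): $\partial_1 B_{f,p}(x,u)=\frac{(pf)'(x)-f(u)p'(x)}{(pf')(u)}$ and $\partial_2 B_{f,p}(x,u)=p(x)\frac{(f(u)-f(x))(pf')'(u)-(pf')(u)f'(u)}{(pf')(u)^2}$. Thus inequality~(iii) is precisely the statement that the differentiable function $B_{f,p}$ lies above each of its tangent planes, and inequality~(iv) is precisely the monotonicity of its gradient; for a $\calC^1$ function on the convex set $I^2$ both of these are equivalent to convexity. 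Therefore (ii)$\Leftrightarrow$(iii)$\Leftrightarrow$(iv), where in the implications (iii)$\Rightarrow$(ii) and (iv)$\Rightarrow$(ii) the regularity assumptions $f\in\calC^{2\#}(I)$, $p\in\calC^1(I)$ built into (iii) and (iv) guarantee that $B_{f,p}$ is $\calC^1$, so that the first-order conditions do imply convexity. Combined with (i)$\Leftrightarrow$(ii), this closes the circle of equivalences.
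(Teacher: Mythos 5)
Your proposal is correct and follows essentially the same route as the paper: reduce to the quasideviation $E(x,u)=p(x)(f(x)-f(u))$ and Theorem~\ref{thm:QDconv}, bootstrap the regularity of $p$ and $f$ by showing that jumps of one-sided derivatives of convex sections are multiplied by sign-changing factors and hence must vanish, and then read (iii) and (iv) as the tangent-plane and gradient-monotonicity characterizations of convexity of the $\calC^1$ function $B_{f,p}$. The only (harmless) deviation is in extracting $f\in\calC^{1\#}(I)$ from (i): you use assertions (a) and (c) of Theorem~\ref{thm:QDconv} together with the classical ``continuous one-sided derivative implies $\calC^1$'' lemma, whereas the paper combines assertion (d) with the near-differentiability of convex functions.
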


\begin{proof} Without loss of generality, we may assume that $f$ is increasing. Define the quasideviation $E\colon I^2\to\R$ by $E(x,u):=p(x)(f(x)-f(u))$. Then we have that $\B_{f,p}=\D_E$.

Assume that $\B_{f,p}=\D_E$ is Jensen convex. Then, according to assertion (a) of Theorem~\ref{thm:QDconv}, we get that $E$ is convex in its first variable. That is, for all $u\in I$, the function $pf-f(u)p$ is convex and hence it is nearly differentiable. Let $u,v$ be distinct elements of $I$, then 
\Eq{*}{
p=\frac{(pf-f(u)p)-(pf-f(v)p)}{f(v)-f(u)},
}
which shows that $p$ is also nearly differentiable. We also have that
\Eq{*}{
  f=\frac{pf-f(u)p}{p}+f(u),
}
which shows that $f$ is also nearly differentiable.

In view of these properties, for all $u\in I$, we can obtain
\Eq{*}{
  \partial_1^+E(u,u)&=(pf-f(u)p)_+'(u)\\
  &=p_+'(u)f(u)+p(u)f_+'(u)-f(u)p_+'(u)=p(u)f_+'(u).
}
Similarly,
\Eq{*}{
  \partial_1^-E(u,u)=p(u)f_-'(u).
}
By assertion (a) of Theorem~\ref{thm:QDconv}, the ratio function $u\mapsto\frac{\partial_1^+E(u,u)}{\partial_1^-E(u,u)}$ is constant, therefore,
$f_+'=cf_-'$ for some constant $c\in\R$. On the other hand, $f$ is differentiable nearly everywhere, hence, $c=1$, which yields that $f$ is differentiable everywhere with a positive derivative. Assertion (ii) of Theorem~\ref{thm:QDconv} now gives us that the function $B_{f,p}$ defined in assertion (ii) is convex. Thus, we have proved the equivalence of assertions (i) and (ii).

Assume now that (ii) holds. It follows from the convexity of $B_{f,p}$ that, for all $x\in I$, the map $u\mapsto B_{f,p}(x,u)$ is convex. Therefore, it is nearly differentiable. For $x,u\in I$, we have that
\Eq{*}{
  f'(u)=\frac{p(x)(f(x)-f(u))}{p(u)B_{f,p}(x,u)}. 
}
For any fixed $x\in I$, the function on the right hand side is nearly differentiable with respect to $u$. Consequently, $f'$ is also nearly differentiable, in particular, $f'$ is continuous.

Using the convexity $B_{f,p}$ again, we can obtain that there exist two functions $r,s\colon I^2\to\R$ such that
\Eq{IBfp}{
  B_{f,p}(y,v)-B_{f,p}(x,u)\geq r(x,u)(y-x)+s(x,u)(v-u), \quad x,y,u,v\in I. 
}
After substituting $y:=x$, inequality \eq{IBfp} implies that
\Eq{*}{
  \frac{(f(x)-f(v))((pf')(u)-(pf')(v))-(pf')(v)(f(v)-f(u))}{(pf')(v)(pf')(u)}\geq \frac{s(x,u)}{p(x)}(v-u).
}
If $v>u$, then dividing the inequality by $(v-u)$ side by side, then taking the right limit as $v\downarrow u$, we get
\Eq{*}{
 \frac{(f(u)-f(x))\cdot (pf')_+'(u)-(pf')(u)\cdot f'(u)}{(pf')(u)^2}
 \geq \frac{s(x,u)}{p(x)}, \qquad x,u\in I.
}
Repeating the above argument for $v<u$, we get that
\Eq{*}{
 \frac{s(x,u)}{p(x)}
 \geq\frac{(f(u)-f(x))\cdot (pf')_-'(u)-(pf')(u)\cdot f'(u)}{(pf')(u)^2}, \qquad x,u\in I.
}
Binding the above two inequalities, it follows that
\Eq{*}{
  (f(u)-f(x))\cdot [(pf')_+'(u)-(pf')_-'(u)]\geq0, \qquad x,u\in I.
}
Since $x$ is arbitrary, this inequality can hold only if 
$(pf')_+'(u)=(pf')_-'(u)$ for all $u\in I$, which proves that $pf'$ is differentiable everywhere. It follows from this property that, for all $x,u\in I$,
\Eq{D2B}{
  s(x,u)=p(x)\frac{(f(u)-f(x))\cdot (pf')'(u)-(pf')(u)\cdot f'(u)}{(pf')(u)^2}=\partial_2B_{f,p}(x,u).
}

Now taking \eq{IBfp} for $v:=u$, we get
\Eq{*}{
  \frac{p(y)(f(y)-f(u))-p(x)(f(x)-f(u))}{(pf')(u)}\geq r(u,x)(y-x), \qquad x,y,u\in I.
}
Therefore, for all $x,y,u\in I$ with $y>x$ we obtain
\Eq{*}{
r(u,x)&\leq\frac{p(y)(f(y)-f(u))-p(x)(f(x)-f(u))}{(pf')(u) (y-x)}\\
&=\frac{1}{(pf')(u)}\bigg(\frac{(pf)(y)-(pf)(x)}{y-x}-f(u)\frac{p(y)-p(x)}{y-x}\bigg).
}
Since both $p$ and $f$ are nearly differentiable, we can take the limit $y \searrow x$ to obtain
\Eq{soL}{
\frac{(pf)'_+(x)-f(u)p'_+(x)}{(pf')(u)}\ge r(u,x), \qquad x,u \in I.
}
Similarly, for all $x,y,u \in I$ with $y<x$, one gets
\Eq{*}{
r(u,x)&\ge\frac{1}{(pf')(u)}\bigg(\frac{(pf)(y)-(pf)(x)}{y-x}-f(u)\frac{p(y)-p(x)}{y-x}\bigg),
}
which, in the limiting case as $y \nearrow x$ leads us to the inequality
\Eq{soR}{
r(u,x)\ge \frac{(pf)'_-(x)-f(u)p'_-(x)}{(pf')(u)}, \qquad x,u \in I.
}
From the inequalities \eq{soL} and \eq{soR}, we can conclude that
\Eq{*}{
\frac{(pf)'_+(x)-f(u)p'_+(x)}{(pf')(u)}&\ge \frac{(pf)'_-(x)-f(u)p'_-(x)}{(pf')(u)}, \qquad x,u \in I.
}
We know that $(pf')(u) >0$, thus we can obtain that
\Eq{*}{
(pf)'_+(x)-f(u)p'_+(x)&\ge (pf)'_-(x)-f(u)p'_-(x), \qquad x,u \in I.
}
By using the differentiability of $f$, for all $x,u \in I$, it follows that 
\Eq{*}{
p'_+(x)f(x)+p(x)f'(x)-f(u)p'_+(x)\ge p'_-(x)f(x)+p(x)f'(x)-f(u)p'_-(x),
}
which can equivalently be rewritten as
\Eq{*}{
(f(x)-f(u))(p'_+(x)-p'_-(x)) \ge 0, \qquad x,u \in I.
}
Since $u$ is arbitrary and $f$ is strictly monotone, this inequality can only hold if $p'_+(x)-p'_-(x)=0$ for all $x \in I$, which yields differentiability of $p$ on the interval $I$. However, we have already proved that $pf'$ is differentiable, therefore $f$ must be twice differentiable.

Therefore, the upper and lower bounds for the function $r$ given by \eq{soL} and \eq{soR} are equal to each other, whence we get that
\Eq{D1B}{
 r(u,x)=\frac{(pf)'(x)-f(u)p'(x)}{(pf')(u)}=\partial_1B_{f,p}(x,u), \qquad x,u \in I.
}

The differentiability of $p$ and the twice differentiability of $f$ imply that $B_{f,p}$ is differentiable. On the other hand, 
it is well known that the partial derivatives of a differentiable convex function are continuous. Therefore, $\partial_1B_{f,p}$ and $\partial_2B_{f,p}$ are continuous over $I^2$.

In view of formula \eq{D1B}, for all $x,u,\in I$ with $x\neq u$, we can obtain that
\Eq{*}{
p'(x)=\frac{\partial_1B_{f,p}(x,u)(pf')(u)-(pf')(x)}{f(x)-f(u)}.
}
This shows that $p'$ is continuous everywhere except at $x=u$. But, since $u$ was an arbitrary element of $I$, we get that $p'$ is continuous on $I$ and hence it belongs to $\calC^{1}(I)$.

Using formula \eq{D2B}, for all $x,u,\in I$ with $x\neq u$, we can get that
\Eq{*}{
f''(u)=\frac{1}{p(u)}\bigg(\frac{(pf')(u)^2\partial_2 B_{f,p}(x,u)+p(x)(pf')(u)f'(u)}{p(x) (f(u)-f(x))}-(p'f')(u)\bigg),
}
which shows that $f''$ is continuous everywhere except at $u=x$. Since $x$ was arbitrary, this implies that $f''$ is continuous on $I$ and hence it belongs to $\calC^{2\#}(I)$.

Now the inequality \eq{IBfp} can be seen to be equivalent to condition (iii), hence the implication (ii)$\Rightarrow$(iii) is verified. On the other hand, if (iii) holds, then $B_{f,p}$ is the pointwise supremum of affine functions and hence it is convex, i.e., (ii) holds as well. The last condition expresses the monotonicity of the gradient of $B_{f,p}$, i.e., that, for all $x,y,u,v\in I$, the inequality
\Eq{*}{
    (\partial_1 B_{f,p}(x,u)\!-\!\partial_1 B_{f,p}(y,v))(x\!-\!y)
    +(\partial_2 B_{f,p}(x,u)\!-\!\partial_2 B_{f,p}(y,v))(u\!-\!v)\geq0
}
holds, which is also known to be equivalent to the convexity of $B_{f,p}$.
\end{proof}

\section{Convexity of Gini Means in Subintervals}

For $q,r\in\R$, we need to introduce the following notations.
\Eq{*}{
  \gamma_{q,r}(t):=
  \begin{cases}
  \dfrac{t^q-t^r}{q-r} &\mbox{if } q\neq r,\\[3mm]
  t^q\log t &\mbox{if } q=r,
  \end{cases}\qquad t\in\R_+,
}
and
\Eq{*}{
  \beta_{q,r}:=
  \begin{cases}
   \bigg(\dfrac{q(q-1)}{r(r-1)}\bigg)^{\frac{1}{q-r}}
   &\mbox{if } q\neq r \mbox{ and } qr(q-1)(r-1)>0,\\[3mm]
   \exp\bigg(\dfrac{1}{q}+\dfrac{1}{q-1}\bigg)  &\mbox{if } q=r \mbox{ and } q(q-1)\neq0.
  \end{cases}
}

\begin{thm}
Let $q,r\in\R$, $0<a<b<\infty$. Then the following three assertions are equivalent to each other.
\begin{enumerate}[(i)]
 \item The mean $\G_{q,r}$ is Jensen convex on the interval $(a,b)$. 
 \item The function $\gamma_{q,r}$ is convex on the interval $\big[\frac{a}{b},\frac{b}{a}\big]$.
 \item One of the following conditions is valid:
 \begin{enumerate}[(1)]
  \item $0\leq\min(q,r)\leq1\leq\max(q,r)$;
  \item $\max(q,r)<1\leq q+r$ and $\beta_{q,r}\leq\frac{a}{b}$;
  \item $\min(q,r)\leq 0$, $1\leq q+r$ and $\beta_{q,r}\geq\frac{b}{a}$;
  \item $1\leq \min(q,r)$ and $\beta_{q,r}\geq\frac{b}{a}$.
 \end{enumerate}
\end{enumerate}
\end{thm}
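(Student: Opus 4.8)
The plan is to reduce assertion~(i) to a one-variable convexity statement through the Bajraktarević theory, recognise the relevant two-variable function as a perspective, and then settle the remaining scalar convexity question by a second-derivative case analysis.

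First I would record that on the subinterval $(a,b)\subseteq\R_+$ the Gini mean $\G_{q,r}$ equals the Bajraktarević mean $\B_{f,p}$ generated by $f(x)=x^{q-r},\ p(x)=x^r$ if $q\neq r$, and by $f(x)=\log x,\ p(x)=x^q$ if $q=r$. In either case $f$ and $p$ are real-analytic on $(a,b)$ and $f$ is strictly monotone with a nonvanishing derivative, so the requirement $f\in\calC^{1\#}$ appearing in Theorem~\ref{thm:Bconv} is automatic. Hence, by the equivalence (i)$\Leftrightarrow$(ii) of that theorem, $\G_{q,r}|_{(a,b)}$ is Jensen convex if and only if the associated map $B_{f,p}$ is convex on $(a,b)^2$. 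A short computation in both cases yields the perspective formula
\Eq{*}{
  B_{f,p}(x,u)=u\,\gamma_{q,r}\Big(\frac{x}{u}\Big),\qquad (x,u)\in(a,b)^2.
}
Since $\gamma_{q,r}$ is smooth on $\R_+$, I would then compute the Hessian of $B_{f,p}$ and find it equal to $u^{-3}\gamma_{q,r}''(x/u)\,ww^{\top}$ with $w=(u,-x)^{\top}$; being a nonnegative multiple of a rank-one positive semidefinite matrix, it is positive semidefinite exactly when $\gamma_{q,r}''(x/u)\geq0$. As $(x,u)$ runs over the square $(a,b)^2$ the ratio $x/u$ runs over $\big(\frac{a}{b},\frac{b}{a}\big)$, so $B_{f,p}$ is convex on $(a,b)^2$ if and only if $\gamma_{q,r}$ is convex on $\big[\frac{a}{b},\frac{b}{a}\big]$, which is the equivalence (i)$\Leftrightarrow$(ii).

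It remains to prove (ii)$\Leftrightarrow$(iii), which I would do by analysing the sign of $\gamma_{q,r}''$. Using the symmetry $\gamma_{q,r}=\gamma_{r,q}$ I may assume $q>r$, and then for $q\neq r$
\Eq{*}{
  \gamma_{q,r}''(t)=\frac{t^{r-2}}{q-r}\big(q(q-1)t^{q-r}-r(r-1)\big),
}
so that the sign is governed by the monotone factor $\psi(t):=q(q-1)t^{q-r}-r(r-1)$ (respectively by $q(q-1)\log t+2q-1$ in the limiting case $q=r$). When $q(q-1)\geq0$ and $r(r-1)\leq0$ the factor $\psi$ is nonnegative throughout $\R_+$, which together with $q>r$ forces $0\leq r\leq 1\leq q$ and gives global convexity; this is exactly condition~(1), and it is precisely the situation in which $\beta_{q,r}$ is undefined. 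In every remaining case $q(q-1)$ and $r(r-1)$ share a common nonzero sign, $\psi$ has a single zero at the point where $t^{q-r}=\frac{r(r-1)}{q(q-1)}$, i.e.\ at $t=\beta_{q,r}^{-1}$, and $\gamma_{q,r}$ is convex to the right of this point if $q(q-1)>0$ and to the left of it if $q(q-1)<0$. Demanding that the convex half-line contain $\big[\frac{a}{b},\frac{b}{a}\big]$ then translates into $\beta_{q,r}\geq\frac{b}{a}$ in the right-sided cases (both $q,r>1$, yielding~(4), or one of $q,r$ negative with the other exceeding $1$, yielding~(3)) and into $\beta_{q,r}\leq\frac{a}{b}$ in the left-sided case $q,r\in(0,1)$, yielding~(2). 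Finally, the elementary identity $\beta_{q,r}^{\,q-r}=\frac{q(q-1)}{r(r-1)}$ (and $\beta_{q,q}=\exp\big(\frac{2q-1}{q(q-1)}\big)$ on the diagonal) shows that $q+r\geq1$ is equivalent to $\beta_{q,r}$ lying on the correct side of $1$, which is what the listed sign constraints on $q+r$ encode; the case $q=r$ is treated identically.

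The main obstacle is this last step: the bookkeeping in (ii)$\Leftrightarrow$(iii) is delicate because one must match the direction of the convexity half-line to each sign pattern of $\big(q(q-1),r(r-1)\big)$, separate global convexity from global \emph{concavity} among the configurations where $\beta_{q,r}$ is undefined, treat the boundary values $q,r\in\{0,1\}$ and the diagonal $q=r$ where $\beta_{q,r}$ degenerates or is given by a separate formula, and verify that the side conditions $q+r\geq1$ together with the $\min/\max$ restrictions leave no convex configuration unaccounted for and admit no spurious one.
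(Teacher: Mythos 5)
Your proposal is correct and follows essentially the same route as the paper: reduction via Theorem~\ref{thm:Bconv} (or Theorem F) to the convexity of $B_{f,p}$, recognition of the perspective form $B_{f,p}(x,u)=u\,\gamma_{q,r}(x/u)$, and a case analysis of the sign of $\gamma_{q,r}''$ organized around $\beta_{q,r}$ exactly as in the paper. The only deviation is that you establish the equivalence with the convexity of $\gamma_{q,r}$ on $\big[\frac{a}{b},\frac{b}{a}\big]$ by computing the rank-one Hessian of the perspective function, whereas the paper substitutes $w=x/u$, $z=y/v$, $\lambda=\frac{tu}{tu+(1-t)v}$ directly into the convexity inequality; since $\gamma_{q,r}$ is smooth, this is an immaterial difference.
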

 
\begin{proof} Let $r<q$ in the subsequent argument. The cases $q=r$ and $q<r$ can be dealt with analogously and therefore, they are left to the reader.

Define $f(x):=x^{q-r}$ and $p(x):=x^r$ for $x\in\R_+$.
Then the Bajraktarević mean $\B_{f,p}$ equals the Gini mean $\G_{q,r}$. Therefore, to characterize the Jensen 
convexity of $\G_{q,r}$ on $(a,b)$, we need to describe the Jensen convexity of $\B_{f,p}$ on $(a,b)$. According to Theorem~\ref{Los71a} or to our Theorem~\ref{thm:Bconv} this property is equivalent to the convexity of the following mapping
\Eq{*}{
  (a,b)^2\ni (x,u)\mapsto \frac{p(x)(f(x)-f(u))}{p(u)f'(u)}
  &= \frac{x^r(x^{q-r}-u^{q-r})}{(q-r)u^{r}u^{q-r-1}}
  \\ &=\frac{u}{q-r}\bigg(\Big(\frac{x}{u}\Big)^q-\Big(\frac{x}{u}\Big)^r\bigg)=u\,\gamma_{q,r}\Big(\frac{x}{u}\Big).
}
That is, $\G_{q,r}$ is Jensen convexity on $(a,b)$ if and only if, for all $x,u,y,v\in(a,b)$ and $t\in[0,1]$,
\Eq{*}{
  (tu+(1-t)v)\,\gamma_{q,r}\Big(\frac{tx+(1-t)y}{tu+(1-t)v}\Big)
  \leq tu\,\gamma_{q,r}\Big(\frac{x}{u}\Big)
  +(1-t)v\,\gamma_{q,r}\Big(\frac{y}{v}\Big).
}
This inequality is equivalent to
\Eq{*}{
  \gamma_{q,r}\Big(\frac{tu}{tu+(1-t)v}\frac{x}{u}&+\frac{(1-t)v}{tu+(1-t)v}\frac{y}{v}\Big)\\
  &\leq \frac{tu}{tu+(1-t)v}\gamma_{q,r}\Big(\frac{x}{u}\Big)
  +\frac{(1-t)v}{tu+(1-t)v}\gamma_{q,r}\Big(\frac{y}{v}\Big).
}
With the substitution $w:=\frac{x}{u}$, $z:=\frac{y}{v}$ and $\lambda:=\frac{tu}{tu+(1-t)v}$ one can easily see that the above inequality holds for all $x,u,y,v\in(a,b)$ and $t\in[0,1]$ if and only if
\Eq{*}{
  \gamma_{q,r}(\lambda w+(1-\lambda)z)
  \leq \lambda \gamma_{q,r}(w)+(1-\lambda)\gamma_{q,r}(z)
}
is valid for all $w,z\in\big(\frac{a}{b},\frac{b}{a}\big)$ and $\lambda\in[0,1]$, that is, if $\gamma_{q,r}$ is convex over the interval $\big(\frac{a}{b},\frac{b}{a}\big)$.

Thus, we have proved that assertion (i) is equivalent to assertion (ii).

The convexity of $\gamma_{q,r}$ over $\big(\frac{a}{b},\frac{b}{a}\big)$ is valid if and only if $\gamma_{q,r}''(t)\geq 0$ for all $t\in\big[\frac{a}{b},\frac{b}{a}\big]$, i.e., if
\Eq{pq}{
  q(q-1)t^{q-2}\geq r(r-1)t^{r-2}, \qquad t\in\big[\tfrac{a}{b},\tfrac{b}{a}\big].
}
Substituting $t=1$, we get that $(q-r)(q+r-1)\geq0$, which implies that $1\leq q+r$.

Then we have the following four possibilities for the location of $(q,r)$ (keeping in mind that $r<q$).
\Eq{*}{
 (1)\quad 0\leq r\leq 1\leq q;\qquad
 (2)\quad r<q<1\leq q+r;\qquad
 (3)\quad r<0 \mbox{ and } 1\leq q+r;\qquad
 (4)\quad 1<r<q.
}

In the case (1), the inequality \eq{pq} holds for all $t>0$, because the left hand side is nonnegative and the right hand side is nonpositive and conditions is equivalent (iii)(1).

In the case (2), we have that $r,q\in(0,1)$, therefore both sides of the inequality \eq{pq} are negative, and hence it is equivalent to the following inequality
\Eq{*}{
  \beta_{q,r}\leq \frac{1}{t}, \qquad t\in\big[\tfrac{a}{b},\tfrac{b}{a}\big],
}
which turns out to be equivalent to (iii)(2).

In the cases (3), and (4), we can see that both sides of the inequality \eq{pq} are positive and 
it is equivalent to the following inequality
\Eq{*}{
  \beta_{q,r}\geq \frac{1}{t}, \qquad t\in\big[\tfrac{a}{b},\tfrac{b}{a}\big],
}
which turns out to be equivalent to (iii)(3) and (iii)(4), respectively.
\end{proof}


\end{document}